\newtheorem{thm}{Theorem}[section]
\newtheorem{lem}[thm]{Lemma}
\newtheorem{prop}[thm]{Proposition}
\theoremstyle{definition}
\newtheorem{definition}[thm]{Definition}
\newtheorem{rem}[thm]{Remark}
\newcommand{\ccc}{\mathbf c}
\newcommand{\zC}{\mathbb C}
\newcommand{\zR}{\mathbb R}
\newcommand{\zN}{\mathbb N}
\newcommand{\zK}{\mathbb K}
\newcommand{\zE}{\mathbb E}
\newcommand{\ve}{\varepsilon}
\begin{document}
\baselineskip=.75cm

\title[On the linear polarization constants of finite dimensional spaces]{On the linear polarization constants of finite dimensional spaces}

\author[D. Carando]{Daniel Carando}
\address{Departamento de Matem\'{a}tica - Pab I,
Facultad de Cs. Exactas y Natu\-ra\-les, Universidad de Buenos Aires,
(1428) Buenos Aires, Argentina and IMAS-CONICET}
\email{dcarando@dm.uba.ar}

\author[D. Pinasco]{Dami\'an Pinasco}
\address{Departamento de Matem\'{a}ticas y Estad\'{i}stica, Universidad Torcuato Di Tella, Av. Figueroa Alcorta 7350, (1428) Buenos Aires, Argentina and CONICET}
\email{dpinasco@utdt.edu}

\author[J. T. Rodr\'{i}guez]{Jorge Tom\'as Rodr\'{i}guez}
\address{Departamento de Matem\'{a}tica and NUCOMPA, Facultad de Cs. Exactas, Universidad Nacional del Centro de la Provincia de Buenos Aires, (7000) Tandil, Argentina and CONICET}
\email{jtrodrig@dm.uba.ar}

\begin{abstract}
We study the linear polarization constants of finite dimensional Banach spaces. We obtain the correct asymptotic behaviour of these constants for the spaces $\ell_p^d$: they behave as $\sqrt[p]{d}$ if $1\le p\le 2$ and as $\sqrt{d}$ if $2\le p<\infty$. For $p=\infty$  we get the asymptotic behaviour up to a logarithmic factor.
\end{abstract}

\maketitle

\section*{Introduction}

Given a Banach space $X$, its \textit{$n$th linear polarization constant} is defined as the smallest constant $\ccc_n(X)$ such that for any set of $n$ linear functionals $\{\psi_j\}_{j=1}^n\subseteq X^*$, we have
\begin{equation}\label{problempolarizacion}
\Vert \psi_1 \Vert \cdots \Vert \psi_n \Vert  \le \ccc_n(X) \, \Vert \psi_1 \cdots \psi_n\Vert,
\end{equation}
where $\psi_1 \cdots \psi_n$ is the $n$-homogeneous polynomial given by the pointwise product of $\psi_1,\ldots,\psi_n,$ and $\Vert \cdot \Vert$ is the supremum norm over the unit sphere of $X.$
Related to this concept the \textit{linear polarization constant }$\ccc(X)$ of $X$ is defined as
$$\ccc(X) = \displaystyle\lim_{{n\rightarrow \infty}} (\ccc_n(X))^{\frac 1 n}.$$
The existence of this limit is a result of \cite{RS}.

These constants have been studied by several authors. Among the works on this topic, in~\cite{RT} the authors proved that for each $n$ there is a constant $K_n$ such that $\ccc_n(X) \leq K_n$ for every Banach space $X$. As a corollary of Theorem 3 from \cite{BST} the best possible constant $K_n$, for \textit{complex} Banach spaces, is $n^n$.
Arias-de-Reyna proved in~\cite{A}  that if $X$ is a \textit{complex }Hilbert space, of dimension greater or equal than $n$,  then
$$\ccc_n(X)=n^{\frac n 2}.$$
This result holds for real Hilbert spaces and $n\leq 5$ (see Theorem 4.6 in \cite{PPT}), but it is not known if it is true for every natural number $n$.

We recall that the linear polarization constant is infinite for infinite dimensional Banach spaces (see Theorem 12 in \cite{RS}). As a consequence, an interesting problem is to understand how this constant behaves as the dimension of the involved spaces vary. For example, the linear polarization constant of a real $d$-dimensional Hilbert space $\mathcal{H}_d$ was obtained by \mbox{Garc\'ia-V\'azquez} and Villa  in \cite{GV}, where they proved that $\ccc(\mathcal{H}_d)$ behaves like $\sqrt{d}$ as $d$ goes to infinity. This result was later extended to complex Hilbert spaces by  A. Pappas and S. G. R\'ev\'esz in \cite{PR}. For the spaces $\ell_1(\zC^d)$ it is know that $\ccc(\ell_1^d(\zC))=d$ (see Proposition 17 of \cite{RS}).

In this article we study the $n$th linear polarization constants, as well as the linear po\-la\-ri\-za\-tion constant of finite dimensional Banach spaces. In the first section we develop a method to estimate the linear polarization constant of a finite dimensional space (see Theorem~\ref{mainprop}). In Section~\ref{sec-elepe}, we apply this method to the finite dimensional spaces $\ell_p^d(\zK)$, obtaining in Theorem~\ref{teo polarizacion} the following asymptotically  optimal results on $d$ (the \emph{asymptotic} notation is explained in Section~\ref{sec-elepe}):
$$\ccc(\ell_p^d(\zK))											
          \asymp  \sqrt[p]{d} \quad \mbox{ if }  1\le p \leq 2    \quad \mbox{ and }	\quad \ccc(\ell_p^d(\zK))  \asymp \sqrt{d}  \quad \mbox{ if }    2\le p<\infty . $$
For $p=\infty$ we obtain $\sqrt{d} \prec  \ccc(\ell_\infty^d(\zK)) \prec \sqrt{d \log d}.$

In Section~\ref{seccion infinito} we use a probabilistic approach to estimate the norm of the product of linear functionals with coefficients $\pm 1$ (in the canonical basis) over  the spaces $\ell_\infty^d(\zC)$. This allow us to  give in Proposition~\ref{prop infinito} some estimates for their $n$th linear polarization constants.



\section{Linear polarization constants of finite dimensional spaces}

Throughout this work, given a Banach space $X$, $B_X$ and $S_X$ will stand for the unit ball and the unit sphere respectively.

In this section we present a general method for es\-ti\-ma\-ting linear polarization constants. In order to state our results, which give lower and upper bounds for these constants, we will define the so--called \emph{admissible} measures, which are measures that satisfy a rather mild condition.

\begin{definition}\label{admisible} Let $X$ be a Banach space and $\lambda$ a Borel measure over a Borel subset $K\subseteq B_X$. We say that $\lambda$ is \textit{admissible} if
$$\int_K \log|\langle x,\psi\rangle|\,\,\,d\lambda(x)$$
is finite for every $\psi\in S_{X^*}$ and the functions $g_m:S_{X^*}\rightarrow \zR$ defined as
\[
g_m(\psi)=\int_K \max\{\log|\langle x,\psi\rangle |,-m\} \,\,\,d\lambda(x),
\]
converges uniformly to the function $g:S_{X^*}\rightarrow \zR$, defined as
\[
g(\psi)=\int_K \log|\langle x,\psi\rangle |\,\,\,d\lambda(x).
\]
\end{definition}

For example, for $\mathcal{H}$ a finite dimensional Hilbert space, the Lebesgue measure over $S_\mathcal{H}$ is admissible, since the functions $g_m$ are constant functions that converges to the constant function $g$.

The main result of this section is the following.

\begin{thm}\label{mainprop} Given a finite dimensional Banach space $X$, let $\mu$ and $\eta$ be admissible probability measures over $S_X$ and $S_{X^*}$ respectively. Then there is $\psi_0 \in S_{X^*}$ and $x_0\in S_X$, depending on $\mu$ and $\eta$, such that
$$ \exp\left\{-\int_{S_{X^*}} \log |\langle x_0,\psi\rangle | \,\,\,d\eta(\psi) \right\} \leq \ccc(X)
 \leq  \exp\left\{- \int_{S_X} \log |\langle x,\psi_0\rangle | \,\,\,d\mu(x) \right\}.$$
\end{thm}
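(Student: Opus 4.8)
The plan is to introduce the two averaged log-potentials
$$G(\psi)=\int_{S_X}\log|\langle x,\psi\rangle|\,d\mu(x)\quad(\psi\in S_{X^*}),\qquad H(x)=\int_{S_{X^*}}\log|\langle x,\psi\rangle|\,d\eta(\psi)\quad(x\in S_X),$$
and to take $\psi_0$ a minimizer of $G$ and $x_0$ a maximizer of $H$. Since $X$ is finite dimensional, $S_X$ and $S_{X^*}$ are compact; admissibility of $\mu$ (resp.\ $\eta$) forces $G$ (resp.\ $H$) to be continuous, because each truncated integral $g_m$ is continuous in its argument (the integrand is bounded and continuous) and $g_m\to G$ (resp.\ $g_m\to H$) uniformly. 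Hence the extrema are attained and the required $\psi_0,x_0$ exist. Throughout I use the homogeneity of \eqref{problempolarizacion} to normalize $\|\psi_j\|=1$, so that $\ccc_n(X)=\sup_{\|\psi_j\|=1}\|\psi_1\cdots\psi_n\|^{-1}$ and $\ccc(X)=\lim_n\ccc_n(X)^{1/n}$.

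For the upper bound I would estimate $\|\psi_1\cdots\psi_n\|$ from below by a $\mu$-average. Since $\mu$ is a probability measure, integrating the pointwise bound $\sum_j\log|\langle x,\psi_j\rangle|\le\log\|\psi_1\cdots\psi_n\|$ gives
$$n\,G(\psi_0)\le\sum_{j=1}^n G(\psi_j)=\int_{S_X}\sum_{j=1}^n\log|\langle x,\psi_j\rangle|\,d\mu(x)\le\log\|\psi_1\cdots\psi_n\|.$$
Thus $\|\psi_1\cdots\psi_n\|\ge e^{nG(\psi_0)}$ for every choice of unit functionals, whence $\ccc_n(X)\le e^{-nG(\psi_0)}$ and, taking $n$th roots, $\ccc(X)\le e^{-G(\psi_0)}$, which is the right-hand inequality. (Finiteness of $\sum_j G(\psi_j)$ is guaranteed by admissibility of $\mu$.)

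For the lower bound I would exhibit, for each large $n$, unit functionals making $\|\psi_1\cdots\psi_n\|$ nearly as small as $e^{nH(x_0)}$, by choosing $\psi_1,\dots,\psi_n$ independently at random with law $\eta$. Writing $f_m(x,\psi)=\max\{\log|\langle x,\psi\rangle|,-m\}$, note that $f_m$ takes values in $[-m,0]$ and that $x\mapsto f_m(x,\psi)$ is $e^m$-Lipschitz uniformly in $\psi$ (since $|\langle\cdot,\psi\rangle|$ is $1$-Lipschitz and the truncation caps the slope of $\log$). Because $\log|\langle x,\psi_j\rangle|\le f_m(x,\psi_j)$, a uniform law of large numbers for the bounded, equicontinuous family $f_m(\cdot,\psi)$ over the compact set $S_X$ yields, with probability tending to $1$,
$$\sup_{x\in S_X}\frac1n\sum_{j=1}^n\log|\langle x,\psi_j\rangle|\le\sup_{x\in S_X}\frac1n\sum_{j=1}^n f_m(x,\psi_j)\le\sup_{x\in S_X}g_m(x)+o(1),$$
where $g_m(x)=\int_{S_{X^*}}f_m(x,\psi)\,d\eta(\psi)$. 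By admissibility of $\eta$ we have $g_m\to H$ uniformly, so $\sup_x g_m(x)\le H(x_0)+\ve$ once $m$ is large. Hence some realization satisfies $\frac1n\log\|\psi_1\cdots\psi_n\|\le H(x_0)+\ve+o(1)$, giving $\ccc_n(X)\ge e^{-n(H(x_0)+\ve+o(1))}$ and therefore $\ccc(X)\ge e^{-H(x_0)-\ve}$; letting $\ve\to0$ produces the left-hand inequality. The main obstacle is precisely this step: controlling the supremum over $x$ of the empirical averages \emph{uniformly}, for which the truncation $f_m$ (keeping the integrands bounded and equi-Lipschitz) together with the uniform convergence $g_m\to H$ built into admissibility is exactly what is needed; a pointwise law of large numbers alone would not survive the $\sup_x$.
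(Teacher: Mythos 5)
Your proposal is correct, and it splits cleanly into two halves: your upper bound is essentially the paper's argument verbatim (integrate the pointwise bound $\sum_j\log|\langle x,\psi_j\rangle|\le\log\Vert\psi_1\cdots\psi_n\Vert$ against the probability measure $\mu$ and use a minimizer $\psi_0$ of $G$, whose existence follows from admissibility plus compactness), while your lower bound takes a genuinely different route. The paper invokes a lemma of Pappas--R\'ev\'esz to produce, for each $n$, \emph{deterministic} unit functionals $\psi_1^n,\dots,\psi_n^n$ whose empirical measures $\eta_n=\frac1n\sum_j\delta_{\psi_j^n}$ converge weak-star to $\eta$; it then tracks points $x_n$ where $\prod_j\psi_j^n$ attains its norm, extracts a convergent subsequence $x_n\to x_0$, and, for each truncation level $m$, combines the uniform convergence $f_{n,m}\to f_{0,m}$ (coming from $x_n\to x_0$) with the weak-star convergence $\eta_n\to\eta$, before letting $m\to\infty$; its $x_0$ is thus an uncontrolled limit point. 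You instead sample $\psi_1,\dots,\psi_n$ i.i.d.\ from $\eta$ and control $\sup_{x\in S_X}\frac1n\sum_j f_m(x,\psi_j)$ directly by a uniform law of large numbers --- the truncated integrands are bounded and equi-Lipschitz with constant $e^m$, and $S_X$ is compact, so a standard covering argument (which you should spell out or cite, but which certainly works) gives the claim --- and then use the uniform convergence $g_m\to H$ supplied by admissibility of $\eta$; your $x_0$ is the explicit maximizer of $H$. The conclusions differ slightly in form: yours is the weakest member of the family of bounds, namely $\ccc(X)\ge\exp\{-\sup_{x\in S_X}H(x)\}$, whereas the paper's inequality holds for a specific (if unknown) $x_0$; but since neither argument controls where $x_0$ lies, the applications in Section~\ref{sec-elepe} use only the worst-case form, so nothing is lost. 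Conversely, the paper's remark points out that its proof needs only pointwise (dominated) convergence of the truncations at the single point $x_0$, while your sup-based argument genuinely uses the uniform convergence built into Definition~\ref{admisible} --- a fair trade for an argument that is self-contained (no external lemma) and produces an explicit $x_0$.
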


We will treat separately the lower and the upper bound, and state both as propositions. Let us first sketch some of the ideas behind the proof, specifically for the lower bound. Since $X$ is finite dimensional, by a compactness argument there exist, for each natural number $n$, linear functionals $\psi_1^n,\ldots,\psi_n^n\in S_{X^*}$ such that
\begin{equation}\label{peores}
\Vert  \psi_1^n\cdots \psi_n^n \Vert =\ccc_n(X)^{-1}.
\end{equation}
Take now $x_n\in B_X$ a point where the function $\psi_1^n\cdots \psi_n^n$ attains its norm, i.e., $\Vert  \psi_1^n\cdots \psi_n^n \Vert = \vert  \psi_1^n\cdots \psi_n^n (x)\vert$. Then,
$$ \Vert  \psi_1^n\cdots \psi_n^n \Vert^{\frac{1}{n}}= \exp \left\{\frac{1}{n} \sum_{i=1}^n \log |\psi_i^n(x_n)|\right\}.$$

If we consider the functions $f_n:S_{X^*}\rightarrow \zK$ defined as $f_n(\varphi)=\log|\varphi(x_n)|$ and $\eta_n$  the probability measure over $S_{X^*}$ defined as
$$\eta_n = \frac{1}{n} \sum_{i=1}^n \delta_{\psi_i^n},$$
then we have:
$$\frac{1}{n}\sum_{i=1}^n \log\left |\psi_i^n(x_n)\right|= \int_{S_{X^*}}	f_n(\psi) \,\,\,d\eta_n .$$

The idea now is to take a subsequence $\{n_k\}$ such that $\eta_{n_k}$ $w^*$-converges to some probability measure $\eta$ and such that $x_{n_k}$ converges to some $x_0\in S_X$. All this will give us an estimate of $\ccc(X)$ in terms of $\eta$ and the function $f_0:S_{X^*}\rightarrow \zK$, defined as $f_0(\varphi)=\log|\varphi(x_0)|$.

Since it is not clear how to find
a set of functions satisfying \eqref{peores} (and then, it is not clear that we can obtain $\eta$ and the estimate for $\ccc$), the following alternative procedure gives a lower bound for it: we fix a measure $\eta$ beforehand and choose the sets of linear functionals $\psi_1^n,\ldots , \psi_n^n$ to obtain this particular $\eta$ as the $w^*$-limit of the measures $\eta_n$. These sets of linear functionals may not satisfy  \eqref{peores}, but we clearly have
$$ \Vert  \psi_1^n\cdots \psi_n^n \Vert \geq \ccc_n(X)^{-1},$$
which is precisely what we need to obtain the desired lower bounds.
The sharpness of the bounds thus obtained will depend on the  good choice of the probability measure $\eta$.


\subsection*{Lower bounds in Theorem \ref{mainprop}}

In the sequel, for a measure space $(K, \nu)$ and an integrable function $f:K\rightarrow \zR$ we will use the notation
$$\nu(f) = \int_K f(\omega) \,\,\,d\nu(\omega).$$

We need the following auxiliary lemma due to  A. Pappas and S. G. R\'ev\'esz (see \cite[Lemma~4]{PR}).

\begin{lem}\label{stronglaw}  Let $\eta$ be any probability measure over $S_{X^*}.$ There is a sequence of sets of norm one linear functionals $\{\psi_1^n,\ldots, \psi_n^n\}_{n\in \zN}$ over $X$  such that

$$\displaystyle\lim_{n\rightarrow \infty} \frac 1 n \sum_{j=1}^n f(\psi_j^n) = \int_{S_{X^*}} f(\psi) \,\,\,d\eta(\psi)$$
for any continuous function $f:S_{X^*} \rightarrow \zR$. In other words, if we consider the measures $\eta_n=\frac{1}{n}\sum_{j=1}^n \delta_{\psi_j^n}$, the sequence $\{\eta_n\}_{n\in \zN}$ $w^*$-converges to $\eta$.
\end{lem}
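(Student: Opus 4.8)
The label of the lemma and the appearance of the empirical measures $\eta_n$ point to the natural route: realize $\eta$ as the almost sure weak-$*$ limit of empirical measures built from independent samples, via the strong law of large numbers. Since $X$ is finite dimensional, $S_{X^*}$ is a compact metric space, so $\eta$ is Radon and I may fix a probability space $(\Omega,\mathcal F,\mathbb P)$ carrying independent, identically distributed $S_{X^*}$-valued random elements $(\psi_j)_{j\ge 1}$ with common law $\eta$. Because each $\psi_j$ takes values in $S_{X^*}$, every realization automatically produces norm one functionals, so the norm-one requirement needs no extra work; moreover, taking $\psi_j^n:=\psi_j$ exhibits the sets $\{\psi_1^n,\dots,\psi_n^n\}$ as the initial segments of a single fixed sequence.

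The plan is then the following. First I would use that $C(S_{X^*})$ is separable in the supremum norm (which holds since $S_{X^*}$ is compact metric) to fix a countable dense family $\{f_k\}_{k\in\zN}\subseteq C(S_{X^*})$. For each fixed $k$ the variables $f_k(\psi_1),f_k(\psi_2),\dots$ are i.i.d.\ and bounded, hence integrable, so the strong law of large numbers provides an event $\Omega_k$ with $\mathbb P(\Omega_k)=1$ on which $\frac1n\sum_{j=1}^n f_k(\psi_j)\to\int_{S_{X^*}} f_k\,d\eta$. Setting $\Omega_0:=\bigcap_{k\in\zN}\Omega_k$, a countable intersection of full-measure events still has full measure, so on $\Omega_0$ this convergence holds simultaneously for every $f_k$.

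It then remains to upgrade the convergence from the dense family to an arbitrary continuous $f$. Fix $\omega\in\Omega_0$ and write $\eta_n=\frac1n\sum_{j=1}^n\delta_{\psi_j(\omega)}$. Given $\ve>0$, choose $k$ with $\|f-f_k\|_\infty<\ve$; since $\eta_n$ and $\eta$ are probability measures, $|\eta_n(f-f_k)|\le\|f-f_k\|_\infty<\ve$ uniformly in $n$ and likewise $|\eta(f_k-f)|<\ve$, so
\[
|\eta_n(f)-\eta(f)|\le |\eta_n(f-f_k)|+|\eta_n(f_k)-\eta(f_k)|+|\eta(f_k-f)|\le 2\ve+|\eta_n(f_k)-\eta(f_k)|.
\]
Letting $n\to\infty$ and using the previous step bounds the right-hand side by $2\ve$, and then $\ve\to0$ gives $\eta_n(f)\to\eta(f)$. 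Since $\Omega_0\neq\varnothing$, fixing any such $\omega$ and setting $\psi_j:=\psi_j(\omega)$ (and $\psi_j^n:=\psi_j$) yields the required sequence. The only genuinely delicate point is this last passage from a dense set to all of $C(S_{X^*})$; it is exactly the uniform bound $|\eta_n(f-f_k)|\le\|f-f_k\|_\infty$, valid because each $\eta_n$ is a probability measure, that makes the three-$\ve$ argument work uniformly in $n$.
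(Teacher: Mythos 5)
Your proof is correct, and it is essentially the same argument the paper relies on: the paper does not prove the lemma itself but cites Lemma~4 of Pappas--R\'ev\'esz \cite{PR}, whose proof is exactly this construction --- i.i.d.\ samples with law $\eta$, the strong law of large numbers applied along a countable dense subset of $C(S_{X^*})$ (available since $S_{X^*}$ is compact metric), and the three-$\ve$ density argument using that the empirical measures have total mass one. Your observation that a single fixed sequence suffices (taking $\psi_j^n:=\psi_j$) is also consistent with that proof.
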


We remark that, although the result in \cite{PR} is stated for $X$ a Hilbert space and $\eta$ the normalized Lebesgue measure, the proof works in the more general setting of our statement.
Now we are ready to prove the lower estimates for $\ccc(X)$.

\begin{prop} Given a finite dimensional Banach space $X$ and an admissible probability measure $\eta$ over $S_{X^*}$, there is a point $x_0 \in S_{X}$, depending on $\eta$, such that
$$\ccc(X) \geq \exp\left\{-\int_{S_{X^*}} \log |\langle x_0,\psi\rangle | \,\,\, d\eta(\psi) \right\}.$$
\end{prop}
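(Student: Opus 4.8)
The plan is to fix the admissible measure $\eta$ and feed it into Lemma~\ref{stronglaw}, obtaining for each $n$ a set of norm-one functionals $\psi_1^n,\ldots,\psi_n^n$ whose empirical measures $\eta_n=\frac1n\sum_{j=1}^n\delta_{\psi_j^n}$ converge weak-$*$ to $\eta$. Since each $\psi_j^n$ has norm one, the defining inequality \eqref{problempolarizacion} gives $\Vert\psi_1^n\cdots\psi_n^n\Vert\ge\ccc_n(X)^{-1}$. Writing $G_n(x)=\frac1n\sum_{j=1}^n\log|\psi_j^n(x)|=\int_{S_{X^*}}\log|\langle x,\psi\rangle|\,d\eta_n(\psi)$ and using that the homogeneous polynomial attains its norm on $S_X$ together with the monotonicity of $t\mapsto t^{1/n}$ and of $\exp$, this reads
$$(\ccc_n(X))^{1/n}\ge\Vert\psi_1^n\cdots\psi_n^n\Vert^{-1/n}=\exp\Big\{-\sup_{x\in S_X}G_n(x)\Big\}.$$
As $X$ is finite dimensional, $G_n$ is upper semicontinuous into $[-\infty,0]$ and attains its supremum at some $x_n\in S_X$. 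Letting $n\to\infty$ and recalling $(\ccc_n(X))^{1/n}\to\ccc(X)$, I obtain $\ccc(X)\ge\exp\{-\limsup_n G_n(x_n)\}$, so the task reduces to bounding $\limsup_n G_n(x_n)$ from above.

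To produce the point $x_0$, I would pass to a subsequence $n_k$ along which $G_{n_k}(x_{n_k})$ converges to $\limsup_n G_n(x_n)$ and, by compactness of $S_X$, thin it further so that $x_{n_k}\to x_0\in S_X$; the convergence $\eta_{n_k}\to\eta$ persists. The main obstacle is the logarithmic singularity of the integrand on the set where $\langle x,\psi\rangle=0$: the map $\psi\mapsto\log|\langle x,\psi\rangle|$ is only upper semicontinuous into $[-\infty,0]$, so one cannot pass the limit directly through $\int\log|\langle x_{n_k},\psi\rangle|\,d\eta_{n_k}$. This is exactly what the truncation in the definition of admissibility is designed to control. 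I would therefore set $h_m(x,\psi)=\max\{\log|\langle x,\psi\rangle|,-m\}$, $g_m(x)=\int_{S_{X^*}}h_m(x,\psi)\,d\eta(\psi)$ and $g(x)=\int_{S_{X^*}}\log|\langle x,\psi\rangle|\,d\eta(\psi)$, noting that $h_m$ is continuous and bounded on the compact set $S_X\times S_{X^*}$, that $\log|\langle x,\psi\rangle|\le h_m(x,\psi)$, and that admissibility of $\eta$ means precisely that $g(x)$ is finite and $g_m\to g$ uniformly on $S_X$.

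With these in hand the limit goes through one truncation level at a time. For fixed $m$, the inequality $\log|\langle x,\psi\rangle|\le h_m$ gives $G_{n_k}(x_{n_k})\le\int_{S_{X^*}}h_m(x_{n_k},\psi)\,d\eta_{n_k}(\psi)$. Since $h_m$ is uniformly continuous, $h_m(x_{n_k},\cdot)\to h_m(x_0,\cdot)$ uniformly on $S_{X^*}$, and together with $\eta_{n_k}\to\eta$ this yields $\int h_m(x_{n_k},\psi)\,d\eta_{n_k}\to g_m(x_0)$; letting $k\to\infty$ gives $\limsup_n G_n(x_n)\le g_m(x_0)$ for every $m$. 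Finally, letting $m\to\infty$ and using admissibility to pass from $g_m(x_0)$ to the finite value $g(x_0)$ produces $\limsup_n G_n(x_n)\le g(x_0)$, which combined with $\ccc(X)\ge\exp\{-\limsup_n G_n(x_n)\}$ yields the desired bound $\ccc(X)\ge\exp\{-\int_{S_{X^*}}\log|\langle x_0,\psi\rangle|\,d\eta(\psi)\}$. I expect the genuinely delicate point to be the simultaneous variation of the measure $\eta_{n_k}$ and the evaluation point $x_{n_k}$, which the combination of uniform continuity of the truncated integrand and weak-$*$ convergence resolves, with admissibility invoked only at the end to remove the truncation.
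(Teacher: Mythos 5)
Your proposal is correct and follows essentially the same route as the paper's proof: Lemma~\ref{stronglaw} to produce empirical measures $\eta_n\to\eta$, the norm-attaining points $x_n$ with a convergent subsequence $x_{n_k}\to x_0$, truncation of the logarithm at level $-m$, passage to the limit for fixed $m$ via weak-$*$ convergence plus uniform convergence of the truncated integrand, and admissibility (in fact only pointwise convergence at $x_0$, as the paper's remark notes) to remove the truncation. The only differences are cosmetic: you justify the uniform convergence step by uniform continuity of $h_m$ on the compact product $S_X\times S_{X^*}$, which is exactly the fact the paper calls ``easy to check.''
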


\begin{proof} Take a sequence of sets of norm one of linear functionals $\{\psi_1^n,\ldots,\psi_n^n\}_{n\in \zN}$ as in Lemma~\ref{stronglaw}, and consider the measures $\eta_n=\frac{1}{n}\sum_{j=1}^n \delta_{\psi_j^n}$. Let $x_n\in S_X$ be a point where  $\prod_{j=1}^n \psi_j^n$ attains its norm. We may assume $\Big\Vert \prod_{j=1}^n \psi_j^n \Big\Vert^\frac{1}{n}$ converges, otherwise we work with a subsequence. With the same argument we may assume that there is $x_0\in S_X$ such that $x_n\rightarrow x_0$.
Since
$$c_n(X) \Big\Vert \prod_{j=1}^n \psi_j^n \Big\Vert \geq 1,$$ we need an upper bound for $ \displaystyle\lim_{n\rightarrow \infty} \Big\Vert \prod_{j=1}^n \psi_j^n \Big\Vert^\frac 1 n$.

For every $n,m \in \zN_0$ consider the functions $f_n: S_{X^*} \rightarrow \zR\cup \{-\infty\}$ and $f_{n,m}:S_{X^*} \rightarrow \zR$ defined as
$$f_n(\psi) = \log |\langle x_n,\psi\rangle |$$
$$f_{n,m}(\psi)=\max \{f_n(\psi), -m\}.$$
Using that $f_{n,m} \geq f_n$ we obtain
\begin{eqnarray}
\Big\Vert \prod_{j=1}^n  \psi_j^n \Big\Vert^{\frac 1 n} &=& \prod_{j=1}^n  |\langle x_n,\psi_j^n\rangle | ^{\frac 1 n} =
 \exp \left\{ \frac 1 n \sum_{j=1}^n \log \left|\langle x_n,\psi_j^n\rangle \right| \right\}\nonumber \\
&= &\exp \left\{ \frac 1 n \sum_{j=1}^n f_n(\psi_j^n) \right\}=
\exp \left\{  \eta_n(f_n) \right\}\nonumber \\
&\leq &\exp \left\{  \eta_n(f_{n,m}) \right\}.\nonumber \
\end{eqnarray}

Fixed $m$, since $x_n\to x_0$, it is easy to check that the functions $f_{n,m}$  converges uniformly to $f_{0,m}$ as $n\to \infty$. Also, we know that $\eta_n$ $w^*$-converges to $\eta$. This altogether gives that $\eta_n(f_{n,m})$ converges to $\eta(f_{0,m})$ and then
$$\displaystyle\lim_{n\rightarrow \infty} \Big\Vert \prod_{j=1}^n \psi_j^n \Big\Vert^\frac 1 n \leq \exp \left\{  \eta(f_{0,m}) \right\}.$$
This holds for arbitrary m. Since $\mu$ is admissible, taking limit on $m$, we obtain
$$ \displaystyle\lim_{n\rightarrow \infty}\Big\Vert \prod_{j=1}^n \psi_j^n \Big\Vert^\frac 1 n \leq \exp\{\eta(f_{0})\} = \exp\left\{ \int_{S_{X^*}} \log |\langle x_0,\psi\rangle | \,\,\,d\eta(\psi) \right\},$$
as desired.
\end{proof}

\begin{rem} In the previous proof we only use from the Definition \ref{admisible} that
$$\int_{S_{X^*}} \max\{\log|\langle x_0,\varphi\rangle |,-m\} \,\,\,d\eta(\varphi)\rightarrow \int_{X^*} \log|\langle x_0,\varphi\rangle |\,\,\,d\eta(\varphi),$$
that is, we only needed pointwise convergence for the point $x_0 \in S_{(X^*)^*}$, rather than uniform convergence on $S_{(X^*)^*}.$ To see this, it is enough to have
$$\int_{S_{X^*}} \log |\langle x_0,\psi\rangle | \,\,\, d\eta(\psi) < \infty$$
and apply the Dominated Convergence Theorem.
\end{rem}


\subsection*{Upper bounds in Theorem \ref{mainprop}}

For the upper bounds we will obtain a slightly better result, since we will get upper bounds for $\ccc_n(X)$ rather than for $\ccc(X)$. Setting $K$ as the sphere $S_X$ in the following proposition we obtain the upper bounds of Theorem \ref{mainprop}.

\begin{prop}\label{prop polarizacion nesima} Given a finite dimensional Banach space $X$, $K\subseteq B_X$, and an admissible probability measure $\mu$ over $K$, there is a point $\psi_0\in S_{X^*}$, depending on $\mu$, such that
$$\ccc_n(X)\leq \exp\left\{-n \int_{K} \log |\langle x,\psi_0\rangle | \,\,\,d\mu(x) \right\}.$$
\end{prop}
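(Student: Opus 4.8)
The plan is to unwind the definition of $\ccc_n(X)$: since it is the least constant for which $\Vert\psi_1\Vert\cdots\Vert\psi_n\Vert\le\ccc_n(X)\,\Vert\psi_1\cdots\psi_n\Vert$ holds for all $\psi_1,\dots,\psi_n\in X^*$, it suffices to verify this inequality with $\ccc_n(X)$ replaced by $E:=\exp\{-n\int_K\log|\langle x,\psi_0\rangle|\,d\mu(x)\}$ for a suitable $\psi_0$. Both sides scale identically under $\psi_j\mapsto t_j\psi_j$ by $n$-homogeneity, so I may assume $\psi_j\in S_{X^*}$ for every $j$; then the left-hand side equals $1$ and the task reduces to the single lower bound $\Vert\psi_1\cdots\psi_n\Vert\ge 1/E$.

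First I would use that $\psi_1\cdots\psi_n$ is $n$-homogeneous, so its supremum norm over $S_X$ equals its supremum over $B_X$; in particular $\Vert\psi_1\cdots\psi_n\Vert\ge\prod_{j=1}^n|\langle x,\psi_j\rangle|$ for every $x\in K\subseteq B_X$. Passing to logarithms and integrating this pointwise inequality against the probability measure $\mu$ yields
$$\log\Vert\psi_1\cdots\psi_n\Vert\;\ge\;\sum_{j=1}^n\int_K\log|\langle x,\psi_j\rangle|\,d\mu(x)\;=\;\sum_{j=1}^n g(\psi_j),$$
where $g(\psi)=\int_K\log|\langle x,\psi\rangle|\,d\mu(x)$ is the function appearing in Definition~\ref{admisible} (with $\lambda=\mu$), finite for each $\psi\in S_{X^*}$ by admissibility. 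Bounding each term below by the infimum of $g$ gives $\log\Vert\psi_1\cdots\psi_n\Vert\ge n\inf_{S_{X^*}}g$.

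The key point is then to realize this infimum as a minimum attained at one fixed $\psi_0$, independent of the tuple $(\psi_j)_j$. This is exactly where admissibility enters: the truncated integrands $\max\{\log|\langle x,\psi\rangle|,-m\}$ take values in $[-m,0]$ (using $|\langle x,\psi\rangle|\le 1$ on $K\times S_{X^*}$) and are continuous in $\psi$ for each $x$, so by dominated convergence each $g_m$ is continuous on $S_{X^*}$; since $g_m\to g$ uniformly, $g$ itself is continuous. As $X$ is finite dimensional, $S_{X^*}$ is compact, hence $g$ attains its minimum at some $\psi_0\in S_{X^*}$, depending only on $\mu$. Then $\Vert\psi_1\cdots\psi_n\Vert\ge\exp\{n\,g(\psi_0)\}=1/E$, which is precisely the required bound.

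The main obstacle I anticipate is this continuity-and-attainment step rather than the elementary estimate preceding it. Without the uniform convergence built into admissibility, $g$ could be discontinuous (or take the value $-\infty$) near functionals $\psi$ whose kernel carries a large $\mu$-portion of $K$, and then $\inf_{S_{X^*}}g$ need not be attained, so no single $\psi_0$ valid for every $n$ and every choice of functionals would exist. Everything else is a routine application of homogeneity and the monotonicity of the logarithm.
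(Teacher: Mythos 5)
Your proof is correct and follows essentially the same route as the paper: use admissibility to get uniform convergence of the continuous truncations $g_m$ to $g$, hence continuity of $g$; extract a minimizer $\psi_0$ by compactness of $S_{X^*}$; and bound $\log\Vert\psi_1\cdots\psi_n\Vert$ from below by integrating the pointwise inequality against the probability measure $\mu$. The only difference is cosmetic — you spell out the dominated-convergence argument for the continuity of each $g_m$ and the homogeneity reduction to norm-one functionals, both of which the paper leaves implicit.
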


\begin{proof} Consider the function $g:S_{X^*}\rightarrow \zR$ defined as $$g(\psi)=\int_{K} \log |\langle x,\psi\rangle | \,\,\,d\mu(x).$$ We start by showing that $g$ is continuous. For every natural number $m$ define $g_m:S_{X^*}\rightarrow \zR$ by $$g_m(\psi) =\int_{K} \max\{-m,\log |\langle x,\psi\rangle |\} \,\,\,d\mu(x).$$ Given that $\mu$ is admissible, $\{g_m\}_{m\in \zN}$ converges uniformly to $g$ and therefore, since each $g_m$ is con\-ti\-nuous, $g$ is con\-ti\-nuous. Given that $g$ is continuous  and $S_{X^*}$ is compact,  there is $\psi_0\in S_{X^*}$ a global minimum of $g$.

Recall that $\ccc_n(X)$ is the smallest constant such that
$$1=\prod_{j=1}^n \Vert \psi_j \Vert \leq \ccc_n(X) \left\Vert \prod_{j=1}^n  \psi_j \right\Vert$$
for any set of linear functionals $\psi_1,\ldots \psi_n \in S_{X^*}$. So we need to prove that
$$\exp\left\{n  \int_{K} \log |\langle x,\psi_0\rangle | \,\,\,d\mu(x)  \right\} \leq \left\Vert \prod_{j=1}^n  \psi_j \right\Vert.$$

Using that $\mu$ is a probability measure and that $\psi_0$ minimizes  $g$, we obtain
\begin{eqnarray}
\left\Vert \prod_{j=1}^n  \langle \cdot,\psi_j\rangle \right\Vert &\geq & \exp\left\{\log\left( \displaystyle\sup_{x\in K} \prod_{j=1}^n |\langle x,\psi_j\rangle |\right)\right\} \nonumber \\
&=& \exp\left\{ \displaystyle\sup_{x\in K} \sum_{j=1}^n \log |\langle x,\psi_j\rangle |\right\} \nonumber \\
&\geq& \exp\left\{  \int_{K}  \sum_{j=1}^n \log |\langle x,\psi_j\rangle | \,\,\,d\mu(x)\right\} \nonumber \\
&=& \exp\left\{\sum_{j=1}^n  \int_{K} \log |\langle x,\psi_j\rangle | \,\,\,d\mu(x)  \right\}\geq  \exp\left\{n  \int_{K} \log |\langle x,\psi_0\rangle | \,\,\,d\mu(x)  \right\},\nonumber  \
\end{eqnarray}
as desired.
\end{proof}

\begin{rem} In the previous proof we used that $\mu$ is admissible only to prove that $g$ has a global minimum.
\end{rem}



\section{Linear polarization constants of $\ell_p^d$ spaces}\label{sec-elepe}

In this section we apply the method developed in the previous section and stated in Theorem  \ref{mainprop}, to estimate the asymptotic behaviour of the linear polarization constants $\ccc(\ell_p^d(\zK))$. To describe the asymptotic behaviour of two sequences of positive numbers $\{a_d\}_{d\in \zN}$ and $\{b_d\}_{d\in \zN}$ we use the notation $a_d \prec b_d$ to indicate that there is a constant $L>0$ such that $a_d \leq L b_d$. The notation  $a_d \asymp b_d$ means that $a_d \prec b_d$ and $a_d \succ b_d$. In the following we write $dS$ for the normalized surface (Lebesgue) measure over the sphere $S_{\ell_2^d}$.

When we consider a $d$-dimensional (real or complex) Hilbert space $\mathcal{H}_d$, taking in Theorem~\ref{mainprop} both  measures $\mu$ and $\eta$ to be the normalized Lebesgue measure over $S_{\mathcal{H}_d}=S_{{\mathcal{H}^*_d}}$, we recover the following result from \cite{PR}:
\begin{equation}
\ccc(\mathcal{H}_d) = \exp\left\{ -\int_{S_{\mathcal{H}_d}} \log|\langle x,\psi_0\rangle | dS(x) \right\}. \label{hilbert}
\end{equation}
Note that, by symmetry, this expression does not depend $\psi_0$.
If we call $$L(d,\zK )=\int_{S_{\mathcal{H}_d}} \log|\langle x,\psi_0\rangle | dS(x),$$ a standard computation (see \cite{PR}) gives:
\[
	-L(d,\zR)= \left\{ \begin{array}{lcl}
                  \sum_{j=1}^{(d-2)/2} \frac{1}{2j} +\log 2  & \mbox{ if } & d\equiv 0(2) \\																
                    &             &     \\
                  \sum_{j=1}^{(d-3)/2} \frac{1}{2j+1} & \mbox{ if } & d\equiv 1(2)
\end{array}
\right. \mbox{ and } -L(d,\zC)=\frac{1}{2} \sum_{j=1}^{d-1} \frac 1 j.
\]
In particular $\ccc({\mathcal{H}_d}) \asymp \sqrt{d}$. Moreover, using the fact that $ \sum_{j=1}^{d-1} \frac 1 j - 2 \log(\sqrt{d})$ increases monotonically to the Euler-Mascheroni constant $\gamma$ it is easy to see that for $\zK =\zR$ and $d$ even
$$\ccc({\mathcal{H}_d})= e^{-L(d,\zR )} \leq  e^\frac{\gamma}{2}\sqrt{2d},$$
while for the rest of the cases we get											
$$\ccc({\mathcal{H}_d})\leq e^\frac{\gamma}{2}\sqrt{d}.  $$


In order to apply our results to a $d$-dimensional Banach space $X$, we need good candidates for the measures $\eta$ and $\mu$. Ideally, the measure $\eta$ on $S_{X^*}$ should be induced by a sequence of sets of norm one linear functionals $\{\psi_1^n,\ldots,\psi_n^n\}_{n\in \zN}$ such that
$$\Vert \psi_1^n\cdots \psi_n^n \Vert = c_n(X)^{-1}.$$
Since it is not easy to find such functionals, a good guess of their distribution on $S_{X^*}$ would be helpful. When $X$ is a Hilbert space, due to the symmetry of the sphere, it is natural to believe that they are uniformly distributed across the sphere. And that is a good choice:  the measure induced by uniformly distributed functionals is the normalized Lebesgue measure which, as we observed, is an optimal choice of $\eta$.

But this argument is no longer valid for the spaces $\ell_p^d$ with $p\neq 2$. If $\frac{1}{p}+\frac{1}{q}=1$, the lack of symmetry of  $S_{\ell_q^d}$ for $q\ne 2$ suggests that the linear functionals will not be uniformly distributed on the sphere. After some reflection, by the geometry of the sphere, one may expect that if $p<2$, the linear functionals should be  more concentrated around the points $e_1,\ldots,e_n$ than around points of the form $\sum \lambda_i e_i$, with $|\lambda_i|=\frac{1}{d^{1/q}}$. This is the case for $n\leq d$ (see \cite[Theorem 2.4]{CPR}), or for $n=dk$, as we will see below (see proof of Theorem \ref{teo polarizacion}, Step II). For $p>2$ we expect the reverse situation.

Then, for the spaces $\ell_p^d$ we will choose a measure $\eta$ reflecting the previous reasoning and try to obtain the best possible lower bound, taking into consideration that we will not have control over the vector $x_0$ mentioned in Theorem \ref{mainprop}.
%

The following is our main result and gives the asymptotic behaviour of the linear polarization constants $\ccc(\ell_p^d(\zK))$ as $d$ goes to infinity. This extends results of \cite{GV} and \cite{PR} to non-Euclidean spaces.
We devote the rest of this section to its proof.

\begin{thm}\label{teo polarizacion} Let  $1\leq p < \infty$. Then,
\[
\ccc(\ell_p^d(\zK))  \asymp \left\{ \begin{array}{lcl}
                  \sqrt{d}  & \mbox{ if } & p\geq 2 \\																
                    &             &     \\
                  \sqrt[p]{d} & \mbox{ if } & p \leq 2.
\end{array}
\right.
\]
For $p=\infty$ we have the following estimation
$$\sqrt{d} \prec  \ccc(\ell_\infty^d(\zK)) \prec \sqrt{d \log d}.$$
\end{thm}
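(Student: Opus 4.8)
The plan is to apply Theorem~\ref{mainprop} separately for the lower and upper bounds, choosing measures $\eta$ on $S_{\ell_q^d}$ and $\mu$ on $S_{\ell_p^d}$ (with $1/p+1/q=1$) that reflect the expected concentration of the extremal functionals. For the lower bound I need an admissible probability measure $\eta$ on $S_{X^*}=S_{\ell_q^d}$ and must control $\exp\{-\int \log|\langle x_0,\psi\rangle|\,d\eta(\psi)\}$ from below for whatever $x_0\in S_{\ell_p^d}$ the theorem produces. Since I have no control over $x_0$, the bound must hold uniformly, so I would take the infimum over $x_0\in S_{\ell_p^d}$ of $-\int\log|\langle x_0,\psi\rangle|\,d\eta$. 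For $p\le 2$ the geometric discussion suggests concentrating $\eta$ near the vertices $e_1,\dots,e_d$; a natural candidate is (a symmetrized version of) a measure supported near the coordinate directions, or more simply the uniform measure on the set $\{\pm e_j\}$, for which $|\langle x,\psi\rangle|=|x_j|$ and the integral becomes $\frac1d\sum_j\log|x_j|$, minimized when $x$ is spread out; this should yield the $\sqrt[p]{d}$ rate. For $p\ge 2$ I would instead use a measure on $S_{\ell_q^d}$ whose pushforward mimics the Hilbertian case enough to recover $\sqrt{d}$, comparing against the known Hilbert estimate $\ccc(\mathcal H_d)\asymp\sqrt d$ via the norm inequalities between $\ell_p^d$ and $\ell_2^d$.

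For the upper bound I apply Proposition~\ref{prop polarizacion nesima} with a well-chosen admissible $\mu$ on $K\subseteq B_{\ell_p^d}$, obtaining $\ccc(X)\le\exp\{-\int_K\log|\langle x,\psi_0\rangle|\,d\mu(x)\}$ for the minimizing $\psi_0$; here the key is that I may pick $K$ and $\mu$ freely, so I would take $\mu$ supported where $|\langle x,\psi_0\rangle|$ is forced to be large for every unit functional $\psi_0$. A robust choice is the normalized surface measure $dS$ on the Euclidean sphere $S_{\ell_2^d}$, rescaled to sit inside $B_{\ell_p^d}$: since $\|x\|_p\le d^{1/p-1/2}\|x\|_2$ for $p\le 2$ and $\|x\|_p\le\|x\|_2$ for $p\ge2$, the set $r\,S_{\ell_2^d}$ lies in $B_{\ell_p^d}$ for an explicit $r=r(d,p)$. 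Then $\int\log|\langle x,\psi_0\rangle|\,d\mu$ equals $\log r + L(d,\zK)$ by the rotation-invariance computation already recorded for the Hilbert case, giving $\ccc(\ell_p^d)\le r^{-1}e^{-L(d,\zK)}\asymp r^{-1}\sqrt d$, which produces $\sqrt d$ for $p\ge2$ and $d^{1/p}$ for $p\le2$ after substituting $r$.

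The case $p=\infty$ is the genuinely harder one and accounts for the logarithmic gap. The lower bound $\sqrt d\prec\ccc(\ell_\infty^d)$ should follow from the $p\ge2$ lower-bound machinery (or by monotonicity/duality arguments relating $\ell_\infty^d$ to the Hilbert case). For the upper bound $\ccc(\ell_\infty^d)\prec\sqrt{d\log d}$, the inscribed-Euclidean-sphere trick degrades: the largest ball $r\,S_{\ell_2^d}\subseteq B_{\ell_\infty^d}$ has $r=1/\sqrt d$, which only gives $d$, not $\sqrt{d\log d}$. The fix is to choose $\mu$ adapted to the cube geometry rather than the Euclidean sphere, using a measure concentrated on vertices or a product measure, so that for every $\psi_0\in S_{\ell_1^d}$ the quantity $\int\log|\langle x,\psi_0\rangle|\,d\mu(x)$ is controlled; a Khinchin-type or Gaussian-comparison estimate then produces the extra $\sqrt{\log d}$ factor. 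I expect this estimation of the minimum over $\psi_0\in S_{\ell_1^d}$ of the logarithmic integral against a cube-adapted measure to be the main obstacle, since it requires understanding the worst-case distribution of $\langle x,\psi_0\rangle$ uniformly in $\psi_0$, which is exactly where the probabilistic techniques of Section~\ref{seccion infinito} enter.
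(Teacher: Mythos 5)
Your proposal has two genuine gaps, one in each half of the argument.

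\textbf{Upper bound.} Your rotation-invariance formula omits a term that destroys the case $p>2$. For $\mu$ the normalized surface measure on $rS_{\ell_2^d}$ and $\psi_0\in S_{\ell_q^d}$, rotation invariance gives
$$\int \log|\langle x,\psi_0\rangle|\,d\mu(x)=\log r+\log\Vert\psi_0\Vert_2+L(d,\zK),$$
not $\log r+L(d,\zK)$: the Hilbert computation applies to the Euclidean-normalized direction of $\psi_0$. Proposition \ref{prop polarizacion nesima} forces $\psi_0$ to be the \emph{minimizer} of this expression over $S_{\ell_q^d}$, i.e.\ the point of minimal $\ell_2$-norm there. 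For $p\le 2$ (so $q\ge 2$) that minimum is $1$ and your bound $d^{1/p}$ survives; but for $p>2$ (so $q<2$) the minimum is $d^{1/2-1/q}$, attained at the diagonal, and your bound degrades to $r^{-1}\sqrt{d}\,d^{1/q-1/2}=d^{1-1/p}$, which is not $\prec\sqrt{d}$ (and equals $d$ at $p=\infty$). The paper's fix is to push the measure onto $S_{\ell_p^d}$ itself, taking $\mu$ to be the image of $dS$ under $z\mapsto z/\Vert z\Vert_p$; this buys the extra factor $\exp\left\{-\int_{S_{\ell_2^d}}\log (1/\Vert z\Vert_p)\, dS\right\}\asymp d^{1/p-1/2}$ of Lemma \ref{lemaux}, which exactly cancels $d^{1/q-1/2}$ and yields $\sqrt{d}$ for all $2<p<\infty$, and $\sqrt{d\log d}$ for $p=\infty$ via the logarithmic estimate in Lemma \ref{lemaux} --- no separate cube-adapted construction is needed. (Incidentally, the largest Euclidean ball inside $B_{\ell_\infty^d}$ has radius $1$, not $1/\sqrt{d}$, though this does not rescue your bound.)

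\textbf{Lower bound for $p<2$.} The uniform measure $\eta$ on $\{\pm e_1,\ldots,\pm e_d\}$ is \emph{not admissible}: for $x_0=e_1$ one gets $\int\log|\langle x_0,\psi\rangle|\,d\eta(\psi)=\frac1d\sum_j\log|\delta_{1j}|=-\infty$, violating Definition \ref{admisible}, so Theorem \ref{mainprop} cannot be invoked with it, and you give no smoothing that preserves your computation. The paper sidesteps the measure machinery here entirely: taking $n_k=dk$ and the $n_k$ functionals consisting of $k$ copies of each $e_j$, it computes exactly $\Vert(e_1)^k\cdots(e_d)^k\Vert=d^{-n_k/p}$ (the maximum is at the diagonal point $d^{-1/p}(1,\ldots,1)$), whence $\ccc_{n_k}(\ell_p^d)\ge d^{n_k/p}$ and so $\ccc(\ell_p^d)\ge d^{1/p}$; this is the rigorous counterpart of your vertex heuristic. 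The same admissibility obstruction over $\zR$ afflicts your ``measure concentrated on vertices'' suggestion for the $p=\infty$ upper bound (e.g.\ $\langle\varepsilon,\psi\rangle=0$ with probability $\frac12$ for $\psi=\frac12(e_1+e_2)$). Finally, your $p\ge2$ lower bound is only a sketch; the paper implements it with the same pushforward measure on $S_{\ell_q^d}$, where again the two factors $d^{1/q-1/2}$ and $d^{1/p-1/2}$ (the latter from $\Vert x_0\Vert_2\le d^{1/2-1/p}$, valid since $x_0\in S_{\ell_p^d}$, $p>2$) cancel to give $\sqrt{d}$ for all $2<p\le\infty$.
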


In order to prove Theorem \ref{teo polarizacion} we need some auxiliary calculations. Next lemma is essentially contained in Lemma 2.8 from \cite{CGP}, but we state it and say a a few words about the proof for completeness.

\begin{lem}\label{concentration} Given $1\leq p <\infty$ we have
$$\int_{S_{\ell_2^d(\zK)}} \| t \|_p^p dS(t)\asymp d^{1-\frac{p}{2}},$$
and for $p=\infty$ we have
$$\int_{S_{\ell_2^d(\zK)}} \| t \|_\infty dS(t)\asymp \left(\frac{\log d}{d}\right)^{\frac 1 2}.$$
\end{lem}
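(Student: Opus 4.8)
The plan is to realise the normalized surface measure $dS$ through the standard Gaussian model: if $g=(g_1,\dots,g_d)$ is a standard Gaussian vector in $\zK^d$ (each real coordinate $N(0,1)$), then $t=g/\Vert g\Vert_2$ is distributed according to $dS$ on $S_{\ell_2^d(\zK)}$, and, by rotational invariance, the direction $t$ is independent of the radius $\Vert g\Vert_2$. This independence converts each integral $\int_S F(t)\,dS(t)=\zE\,F(t)$ into a ratio of two Gaussian expectations. For finite $p$ I would first use the invariance of the sphere under permutations and sign changes of the coordinates to reduce to a single coordinate,
$$\int_{S_{\ell_2^d(\zK)}} \Vert t\Vert_p^p\,dS(t)=d\int_{S_{\ell_2^d(\zK)}}|t_1|^p\,dS(t).$$

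Writing $g_1=\Vert g\Vert_2\,t_1$ and using the independence of $t_1$ and $\Vert g\Vert_2$ gives
$$\zE|g_1|^p=\zE\Vert g\Vert_2^p\cdot\zE|t_1|^p,\qquad\text{hence}\qquad \int_{S_{\ell_2^d(\zK)}}|t_1|^p\,dS=\frac{\zE|g_1|^p}{\zE\Vert g\Vert_2^p}.$$
The numerator $\zE|g_1|^p$ is a constant $c_p$ depending only on $p$, while $\Vert g\Vert_2^2$ is a $\chi^2$ variable (with $d$ degrees of freedom in the real case, $2d$ in the complex one), so $\zE\Vert g\Vert_2^p$ is an explicit ratio of Gamma functions. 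In the real case this yields
$$\int_{S_{\ell_2^d(\zK)}}\Vert t\Vert_p^p\,dS=d\cdot c_p\cdot\frac{\Gamma(d/2)}{\Gamma((d+p)/2)},$$
with the analogous expression (and the same final asymptotics) in the complex case. Applying the elementary estimate $\Gamma(x+a)/\Gamma(x)\sim x^{a}$ with $x=d/2$, $a=p/2$ gives $\Gamma(d/2)/\Gamma((d+p)/2)\asymp d^{-p/2}$, and therefore the integral is $\asymp d\cdot d^{-p/2}=d^{1-p/2}$, as claimed.

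For $p=\infty$ the same device applies, since $\Vert t\Vert_\infty=\max_i|t_i|=\max_i|g_i|/\Vert g\Vert_2$ depends only on the direction $t$ and is thus independent of $\Vert g\Vert_2$:
$$\int_{S_{\ell_2^d(\zK)}}\Vert t\Vert_\infty\,dS(t)=\zE\max_i|t_i|=\frac{\zE\,\max_i|g_i|}{\zE\Vert g\Vert_2}.$$
The denominator is again a Gamma ratio with $\zE\Vert g\Vert_2\asymp\sqrt d$. For the numerator I would invoke the two-sided estimate $\zE\max_i|g_i|\asymp\sqrt{\log d}$ for the expected maximum of $d$ independent Gaussian-type magnitudes: the upper bound $\zE\max_i|g_i|\le C\sqrt{\log d}$ follows from a union bound combined with the tail estimate $e^{-s^2/2}$, and the matching lower bound from the fact that, with probability bounded away from $0$, at least one $|g_i|$ exceeds $c\sqrt{\log d}$. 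Dividing gives $\int_S\Vert t\Vert_\infty\,dS\asymp\sqrt{\log d}/\sqrt d=(\log d/d)^{1/2}$.

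Everything except the Gaussian maximum reduces to routine Gamma-function asymptotics, so I expect the only genuinely non-trivial input to be the two-sided bound $\zE\max_i|g_i|\asymp\sqrt{\log d}$, and within it the \emph{lower} bound: the upper bound is an immediate union-bound argument, whereas the lower bound requires controlling the probability that the maximum is large, which one obtains from the independence of the coordinates and their explicit tails (or, alternatively, by quoting a standard result on suprema of Gaussian processes).
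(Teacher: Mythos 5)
Your proposal is correct and follows essentially the same route as the paper: the paper reduces the complex case to the real one, quotes Lemma 2.8 of [CGP] for finite $p$ (whose underlying argument is precisely the Gaussian representation of the spherical integral), and for $p=\infty$ invokes the Gaussian measure together with the standard estimate $\zE \max_i |g_i| \asymp \sqrt{\log d}$, exactly as you do. The only difference is that you spell out the polar decomposition (radius--direction independence) and the Gamma-function asymptotics that the paper leaves to the cited reference, so your write-up is a self-contained version of the same proof.
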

\begin{proof}
The complex case can be easily deduced from the real case, since the norms of $\ell_p^d(\mathbb C)$ and $\ell_p^{2d}(\mathbb R)$ are equivalent up to a factor which is independent of $d$.  The case $p<\infty$ and $\zK=\zR$ is a particular case of Lemma 2.8 from \cite{CGP}.
%
The case $p=\infty$ and $\zK =\zR$ follows just as in the case $p<\infty$, considering the Gaussian measure $\gamma$ over $\zR^d$ and using the well known behaviour of the maximum of $d$ standard Gaussian variables
$$\int_{\zR^d} \Vert z \Vert _\infty \, d\gamma(z) \asymp \sqrt{\log d}. \qedhere$$
\end{proof}

\noindent With this lemma we are able to prove the following.

\begin{lem}\label{lemaux} Let $ 1\leq p < \infty$. Then
$$  \exp\left\{\int_{S_{\ell_2^d(\zK)}} \log \left(\frac{1}{\Vert z \Vert_p}\right) dS(z) \right\}\asymp d^{\frac 1 2 - \frac 1 p} ,$$
and for $p=\infty$ we have
$$  \left(\frac{d}{\log d} \right)^{\frac 1 2} \prec \exp\left\{\int_{S_{\ell_2^d(\zK)}} \log \left(\frac{1}{\Vert z \Vert_\infty}\right) dS(z) \right\}\prec d^{\frac 1 2}.$$
\end{lem}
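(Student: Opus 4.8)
The plan is to take logarithms and estimate the single quantity $I_p:=\int_{S_{\ell_2^d(\zK)}}\log\|z\|_p\,dS(z)$ (and its $p=\infty$ analogue $J:=\int_{S_{\ell_2^d(\zK)}}\log\|z\|_\infty\,dS(z)$), since the asserted estimates are exactly $-I_p=(\tfrac12-\tfrac1p)\log d+O(1)$ and the two-sided bound $-\tfrac12\log d+O(1)\le -J\le -\tfrac12\log d+\tfrac12\log\log d+O(1)$. Writing $\log\|z\|_p=\tfrac1p\log\|z\|_p^p$ lets me feed in the first moments supplied by Lemma~\ref{concentration}, namely $\int_{S_{\ell_2^d}}\|z\|_p^p\,dS\asymp d^{1-p/2}$ and $\int_{S_{\ell_2^d}}\|z\|_\infty\,dS\asymp(\log d/d)^{1/2}$. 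As in Lemma~\ref{concentration}, I may assume $\zK=\zR$, the complex case reducing to the real one.

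Two elementary tools dispatch most of the work. First, Jensen's inequality (concavity of $\log$) gives $I_p=\tfrac1p\int\log\|z\|_p^p\,dS\le\tfrac1p\log\int\|z\|_p^p\,dS=(\tfrac1p-\tfrac12)\log d+O(1)$, and likewise $J\le\log\int\|z\|_\infty\,dS=\tfrac12\log\log d-\tfrac12\log d+O(1)$; this yields every \emph{lower} bound on the exponentials ($\exp(-I_p)\succ d^{1/2-1/p}$ for all $p$, and $\exp(-J)\succ(d/\log d)^{1/2}$). Second, the pointwise norm comparisons $d^{1/p-1/2}\le\|z\|_p\le1$ for $p\ge2$, $1\le\|z\|_p\le d^{1/p-1/2}$ for $p\le2$, and $d^{-1/2}\le\|z\|_\infty\le1$, all valid on $S_{\ell_2^d}$, give pointwise bounds on $\log\|z\|_p$ after integration. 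For $p\ge2$ this gives $I_p\ge(\tfrac1p-\tfrac12)\log d$ directly, closing the \emph{upper} bound $\exp(-I_p)\prec d^{1/2-1/p}$; for $p=\infty$ the bound $\|z\|_\infty\ge d^{-1/2}$ gives $J\ge-\tfrac12\log d$ and hence $\exp(-J)\prec d^{1/2}$. Together with the first tool, the cases $p\ge2$ and $p=\infty$ are then complete.

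The one genuinely hard case is the \emph{upper} bound for $1\le p<2$: here the pointwise inequality only yields $\|z\|_p\ge1$, i.e. $I_p\ge0$, which is far too weak, and what is needed is the reverse Jensen estimate $\int\log\|z\|_p^p\,dS\ge\log\int\|z\|_p^p\,dS-O(1)$ for $Y:=\|z\|_p^p$. I would obtain this by passing to the rotation-invariant Gaussian model: if $g=(g_1,\dots,g_d)$ is a standard Gaussian vector then $z=g/\|g\|_2$ is uniform on $S_{\ell_2^d}$, so $\log\|z\|_p=\log\|g\|_p-\log\|g\|_2$ and hence $I_p=\zE\log\|g\|_p-\zE\log\|g\|_2$. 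The denominator is harmless: $\|g\|_2^2$ is $\chi^2_d$, and a standard digamma computation gives $\zE\log\|g\|_2=\tfrac12\log d+O(1)$. For the numerator, $\|g\|_p^p=\sum_{i=1}^d|g_i|^p$ is a sum of i.i.d. nonnegative variables with finite mean $c_p=\zE|g_1|^p$, so Jensen gives the upper estimate $\zE\log\|g\|_p^p\le\log(c_pd)$, while the matching lower estimate $\zE\log\|g\|_p^p\ge\log d-O(1)$ follows once I control the left tail.

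The crux, and the step I expect to be the main obstacle, is therefore this lower-tail bound for the i.i.d. sum $\sum|g_i|^p$. I plan to use that $|g_i|^p$ has a finite negative exponential moment, $\phi(\lambda):=\zE e^{-\lambda|g_1|^p}<1$ for every $\lambda>0$, so that Chernoff's inequality gives $P\big(\sum_{i=1}^d|g_i|^p<\ve c_pd\big)\le e^{\lambda\ve c_pd}\phi(\lambda)^d$, which is exponentially small in $d$ for a suitable $\ve\in(0,1)$. Splitting $\zE\log(\sum|g_i|^p)$ over the event $\{\sum|g_i|^p\ge\ve c_pd\}$ and its complement, and using on the bad event the crude bound $\log(\sum|g_i|^p)\ge\log|g_1|^p$ together with the integrability of $\log|g_1|$, bounds the Jensen gap by a constant independent of $d$. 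This gives $\zE\log\|g\|_p=\tfrac1p\log d+O(1)$, whence $I_p=(\tfrac1p-\tfrac12)\log d+O(1)$ and the remaining upper bound $\exp(-I_p)\prec d^{1/2-1/p}$ for $1\le p<2$. Alternatively, a second-moment computation showing $\operatorname{Var}(\|z\|_p^p)=O(d^{-1})\,(\zE\|z\|_p^p)^2$ on the sphere yields the same reverse Jensen bound via Chebyshev, but the Gaussian route keeps the constants uniform in $p$ most transparently.
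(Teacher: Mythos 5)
Your proof is correct, and for the harder half it takes a genuinely different route from the paper's. Write $I_p:=\int_{S_{\ell_2^d}}\log\Vert z\Vert_p\,dS(z)$ and $J:=I_\infty$. Your easy half (the bounds $\exp\{-I_p\}\succ d^{\frac12-\frac1p}$ and $\exp\{-J\}\succ(d/\log d)^{\frac12}$) is exactly the paper's argument: Jensen's inequality applied to $\Vert z\Vert_p^p$ (resp.\ $\Vert z\Vert_\infty$) combined with the moments of Lemma~\ref{concentration}. For the opposite bounds, however, the paper has a single short argument valid simultaneously for all $1\le p\le\infty$: writing $\log(1/\Vert z\Vert_p)=\frac1d\log(\Vert z\Vert_p^{-d})$, Jensen gives $\int_{S_{\ell_2^d}}\log(1/\Vert z\Vert_p)\,dS\le\frac1d\log\int_{S_{\ell_2^d}}\Vert z\Vert_p^{-d}\,dS$, and the exact identity $\int_{S_{\ell_2^d}}\Vert z\Vert_p^{-d}\,dS=|B_{\ell_p^d}|/|B_{\ell_2^d}|$ (relation (6.2) in \cite{Pi}) together with the volume asymptotics $|B_{\ell_p^d}|^{1/d}\asymp d^{-1/p}$ (\cite[Equation~(1.18)]{Pi}) finishes at once; the key point is that integrating the $(-d)$-th power of the norm over the sphere produces exactly a volume ratio. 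You instead split into cases: for $p\ge2$ and $p=\infty$ you use the pointwise comparisons $\Vert z\Vert_p\ge d^{\frac1p-\frac12}\Vert z\Vert_2$ and $\Vert z\Vert_\infty\ge d^{-\frac12}\Vert z\Vert_2$, which is even more elementary than the paper (nothing beyond H\"older is needed); and for $1\le p<2$, where the pointwise bound degenerates, you prove a reverse Jensen inequality in the Gaussian model via a Chernoff lower-tail estimate. That argument is sound: $g/\Vert g\Vert_2$ is uniform on the sphere, so $I_p=\zE\log\Vert g\Vert_p-\zE\log\Vert g\Vert_2$; the $\chi^2_d$/digamma computation gives $\zE\log\Vert g\Vert_2=\frac12\log d+O(1)$; the lower-tail bound $P\big(\sum_{i}|g_i|^p<\ve c_pd\big)\le\big(e^{\lambda\ve c_p}\phi(\lambda)\big)^d$ is exponentially small for suitable fixed $\lambda>0$ and $\ve$ small; and on the bad event the crude bound $\log\sum_i|g_i|^p\ge p\log|g_1|$ suffices, since $\log|g_1|$ is integrable with distribution independent of $d$, so its integral over an event of exponentially small probability tends to $0$ by absolute continuity of the integral. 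The trade-off: the paper's homogeneity/volume-ratio trick is uniform in $p$ and three lines long, but imports two nontrivial facts from Pisier's book; your argument is longer in the $p<2$ case but self-contained modulo standard Gaussian facts, and it isolates clearly where the geometry is easy (pointwise norm equivalence for $p\ge 2$) versus where a genuine concentration phenomenon is needed ($p<2$).
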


\begin{proof}

We prove only the real case, since the complex case follows from the real one as in Lemma \ref{concentration}. Let us start with  the upper bound and $p<\infty$, using Jensen's inequality and relation (6.2) from the proof of Theorem 6.1 in \cite{Pi}
$$\int_{S_{\ell_2^d(\zR)}}  \frac{1}{\Vert z \Vert_p^d}dS(z) =\frac{|B_{\ell_p^d}|}{|B_{\ell_2^d}|},$$
we have
\begin{eqnarray}
\int_{S_{\ell_2^d(\zR)}} \log \left(\frac{1}{\Vert z \Vert_p}\right) dS(z)&=&\frac{1}{d} \int_{S_{\ell_2^d(\zR)}} \log \left(\frac{1}{\Vert z \Vert_p^d}\right) dS(z) \nonumber \\
&\leq &\frac{1}{d} \log\left(\int_{S_{\ell_2^d(\zR)}}  \frac{1}{\Vert z \Vert_p^d}dS(z)\right)\nonumber \\
&=& \frac{1}{d}\log \left(\frac{|B_{\ell_p^d}|}{|B_{\ell_2^d}|} \right) = \log \left( \left(\frac{|B_{\ell_p^d}|}{|B_{\ell_2^d}|} \right)^{\frac 1 d} \right). \nonumber \
\end{eqnarray}
Therefore, by \cite[Equation (1.18)]{Pi}
$$|B_{\ell_p^d}|^\frac{1}{d}\asymp \frac{1}{d^p},$$ we obtain
$$\exp\left\{\int_{S_{\ell_2^d(\zR)}} \log \left(\frac{1}{\Vert z \Vert_p}\right) dS(z) \right\} \leq \left(\frac{|B_{\ell_p^d}|}{|B_{\ell_2^d}|} \right)^{\frac 1 d} \prec d^{\frac 1 2 - \frac 1 p}. $$
The upper bound for $p=\infty$  follows using the obvious modifications to the previous reasoning.

For the lower bound and $p<\infty$, we will use again Jensen's inequality to get
\begin{eqnarray}
\int_{S_{\ell_2^d(\zR)}} \log \left(\frac{1}{\Vert z \Vert_p}\right) dS(z) &=& \int_{S_{\ell_2^d(\zR)}} -\frac{1}{p}\log \left(\Vert z \Vert_p^p\right)  dS(z)  \nonumber \\
&\geq & -\frac{1}{p}\log \left(\int_{S_{\ell_2^d(\zR)}} \Vert z \Vert_p^p  dS(z)\right).   \nonumber \
\end{eqnarray}
Then, using Lemma \ref{concentration}, we obtain
\begin{eqnarray}
\exp\left\{\int_{S_{\ell_2^d(\zR)}} \log \left(\frac{1}{\Vert z \Vert_p}\right) dS(z) \right\} &\geq & \left(\int_{S_{\ell_2^d(\zR)}} \Vert z \Vert_p^p  dS(z)\right)^{-\frac{1}{p}} \nonumber \\
&\succ & d^{\frac 1 2 - \frac 1 p}. \nonumber \
\end{eqnarray}

As before, using the obvious modifications to the previous reasoning, we obtain the lower bound for the case  $p=\infty$.
 \end{proof}

%


Now we are ready to prove our main result.

\begin{proof}[Proof of Theorem \ref{teo polarizacion}] In order to have a better organization, we divide the proof in different parts.  Given that the proof is the same for $\zK=\zC$ or $\zR$, for simplicity, we omit the notation on the scalar field. Throughout this proof $q$ will be the conjugate exponent of $p$.
\paragraph{\textbf{Step I:}  $\ccc(\ell_p^d) \succ \sqrt{d}$ for $2 < p  \leq \infty$.}

As mentioned before, we want to consider a measure related to the geometry of the sphere $S_{\ell_p^d}$. That being said, we also want a measure that can be easily related to the Lebesgue measure of $S_{\ell_2^d}$, given that for Hilbert spaces the linear polarization constant is known.

Consider, then, the measure $\eta$ on $S_{\ell_q^d}$ defined by
$$\eta(A) = \int_{H(A)} \frac{1}{|DH^{-1}(\varphi)|} dS(\varphi),$$
where  $H:S_{\ell_q^d}\rightarrow S_{\ell_2^d}$ is defined as $H(\psi)=\frac{\psi}{\Vert \psi \Vert_2}$. That is, we choose $\eta$ such that for any integrable function $f:S_{\ell_q^d}\rightarrow \zK$, we have
\begin{equation}\label{ecuacion eta}
\int_{S_{\ell_q^d}} f(\psi) \,\,\,d\eta(\psi) = \int_{S_{\ell_2^d}} f\left(\frac{\varphi}{\Vert \varphi\Vert_q}\right) dS(\varphi).
\end{equation}

Using that the normalized Lebesgue measure is admissible, and its close relation with $\eta$, it is easy to see that $\eta$ is admissible. Then, by Theorem \ref{mainprop}, there is $x_0\in S_{\ell_p^d}$ such that

$$\ccc(\ell_p^d) \geq \exp\left\{ -\int_{S_{\ell_q^d}} \log (|\langle x_0,\psi\rangle |) \,\,\,d\eta(\psi) \right\}.$$
Let's find an upper bound for the integral. By \eqref{ecuacion eta}, we have
\begin{eqnarray}
 \int_{S_{\ell_q^d}} \log (|\langle x_0,\psi\rangle |) \,\,\,d\eta(\psi) &= & \int_{S_{\ell_2^d}} \log \left(\left|\langle x_0,\frac{\varphi}{\Vert \varphi \Vert_q}\rangle \right|\right) dS(\varphi) \nonumber \\
&= & \int_{S_{\ell_2^d}} \log \left(\left|\langle x_0\frac{\Vert x_0 \Vert_2}{\Vert x_0\Vert_2},\frac{\varphi}{\Vert \varphi \Vert_q}\rangle \right|\right) dS(\varphi)  \nonumber \\
&= &\int_{S_{\ell_2^d}} \log \left(\left|\langle \frac{x_0}{\Vert x_0 \Vert_2},\varphi\rangle \right|\right) dS(\varphi) \nonumber \\
&  & +\int_{S_{\ell_2^d}} \log \left(\frac{1}{\Vert \varphi \Vert_q}\right) dS(\varphi)+ \log  (\Vert x_0 \Vert_2). \nonumber \
\end{eqnarray}
Then, using \eqref{hilbert}, Lemma \ref{lemaux} and that $x_0 \in S_{\ell_p^d}$, with $p>2$, we obtain
\begin{eqnarray}
\ccc(\ell_p^d) &\geq & \ccc(\ell_2^d) \exp\left\{ -\int_{S_{\ell_2^d}} \log \left(\frac{1}{\Vert \varphi \Vert_q}\right) dS(\varphi) \right\} \frac{1}{\Vert x_0 \Vert_2}\nonumber \\
&\succ &\ccc(\ell_2^d) d^{\frac 1 q - \frac 1 2} d^{\frac 1 p - \frac 1 2} =  \ccc(\ell_2^d) \nonumber\\
&\asymp & \sqrt{d}.\nonumber\
\end{eqnarray}

\paragraph{\textbf{Step II:} $\ccc(\ell_p^d) \succ \sqrt[p]{d}$ for $p < 2$.}

Note that in this case, the previous procedure would lead us to
$\ccc(\ell_p^d) \succ \sqrt[q]{d},$
so we need an alternative way. It is enough to find a subsequence of natural numbers $\{n_k\}_{k\in \zN}$ such that
$$\ccc_{n_k} (\ell_p^d) \succ d^{\frac{n_k}{ p}}.$$
Let us consider the subsequence $n_k=dk$. For each $k$ consider the following set of norm one linear functionals
$$\{\underbrace{e_1,\ldots, e_1,}_{k \mbox{ times}}\ldots ,\underbrace{e_d,\ldots ,e_d}_{k \mbox{ times}}\}\subseteq S_{\ell_q^d},$$
that is, we consider $k$ copies of each vector of the canonical basis. Then we have
\begin{equation}
\ccc_{n_k} (\ell_p^d) \geq \Vert   (e_1)^k\cdots (e_d)^k \Vert^{-1} = \sqrt[p]{ \frac{ \left( k+\cdots +k \right)^{k+\cdots +k}}{ k^k\cdots k^k } }= \sqrt[p] { d^{n_k} }, \nonumber
\end{equation}
since $(e_1)^k\cdots (e_d)^k $ attains its maximum on $\left(\frac{1}{d^{\frac{1}{p}}},\ldots, \frac{1}{d^{\frac{1}{p}}} \right)$.

Note that in this case we proved that  $\ccc(\ell_p^d) \geq \sqrt[p]{d}$, rather than $\ccc(\ell_p^d) \succ \sqrt[p]{d}.$ We also remark that the strategy followed in this step would not give useful information in the previous case.

\paragraph{\textbf{Step III:} $\ccc(\ell_p^d) \prec \sqrt{d}$ for $2<p <\infty$.}

As before, define the measure $\mu$ on $S_{\ell_p^d}$ by
$$\mu(A)=\int_{G(A)} \frac{1}{|DG^{-1}(z)|} dS(z),$$
where  $G:S_{\ell_p^d}\rightarrow S_{\ell_2^d}$ is defined as $G(z)=\frac{z}{\Vert z \Vert_2}$.
Proceeding as in the previous case, we obtain
\begin{equation}\label{ecuacion cota sup del teo}
\ccc(\ell_p^d) \leq \ccc(\ell_2^d) \exp\left\{ -\int_{S_{\ell_2^d}} \log \left(\frac{1}{\Vert z \Vert_p}\right) dS(z) \right\} \frac{1}{\Vert \psi_0 \Vert_2},
\end{equation}
where $\psi_0$ is some point in $S_{\ell_q^d}$. Note that so far the fact that $2<p <\infty$ has not been used.

Using Lemma \ref{lemaux} and the that $q<2$ we conclude
\begin{eqnarray}
\ccc(\ell_p^d) &\prec & \ccc(\ell_2^d) d^{\frac 1 p - \frac 1 2} d^{\frac 1 q - \frac 1 2}  =  \ccc(\ell_2^d) \nonumber \\
& \asymp & \sqrt{d} .\nonumber
\end{eqnarray}

\paragraph{\textbf{Step IV:} $\ccc(\ell_\infty^d) \prec \sqrt{d \log d}$.}

Combining \eqref{ecuacion cota sup del teo} with Lemma \ref{lemaux} for $p=\infty$  we obtain
$$
\ccc(\ell_\infty^d) \prec  \ccc(\ell_2^d)  \left(\frac{\log d}{d} \right)^{\frac 1 2} d^{\frac 1 2}\nonumber
 =  \sqrt{d \log d} .
$$

\paragraph{\textbf{Step V:} $\ccc(\ell_p^d) \prec \sqrt[p]{d}$ for $p < 2$.}
By \eqref{ecuacion cota sup del teo}, the fact that in this case  $\psi_0$ is some point in $S_{\ell_q^d}$ with $q>2$ and  Lemma~\ref{lemaux} we obtain
$$
\ccc(\ell_p^d) \prec  \ccc(\ell_2^d) d^{\frac 1 p - \frac 1 2} 1   \asymp  \sqrt[p]{d}. \nonumber \
$$

\end{proof}



\section{On the $n$th linear polarization constant of $\ell_\infty^d(\zC)$}\label{seccion infinito}

In this section we study the $n$th linear polarization constant of the complex finite dimensional spaces $\ell_\infty^d(\zC)$. Although we do not solve the gap in Theorem \ref{teo polarizacion}, we obtain a more precise result on the lower bounds.  We use  a probabilistic approach to prove the existence of li\-near functionals $\varphi_1,\ldots,\varphi_n :\ell_\infty^d\rightarrow \zC$ such that the norm of the product is small in comparison with the product of the norms.
The probabilistic techniques we use in this section are an adaptation to our problem of techniques used, for example, by H. Boas in \cite{Bo}. The aim of this section is then to prove the following.

\begin{prop}\label{prop infinito} The $n$th linear polarization constant of  $\ell_\infty^d(\zC)$ satisfies
$$\ccc_n(\ell_\infty^d(\zC)) \geq \frac{1}{2}\sqrt{\frac{ d^n}{(24n)^d}}.$$
\end{prop}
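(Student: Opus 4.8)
The plan is to produce, by a first-moment (probabilistic) argument, functionals $\varphi_1,\dots,\varphi_n$ with $\pm 1$ coefficients whose product has small supremum norm, and then read off the lower bound from the defining inequality \eqref{problempolarizacion}. Concretely, I would let $(\varepsilon_{ij})_{1\le i\le d,\,1\le j\le n}$ be independent random signs and set $\varphi_j(z)=\sum_{i=1}^d \varepsilon_{ij} z_i$. Each $\varphi_j$ has norm $\|\varphi_j\|=d$ in $\ell_\infty^d(\zC)^*=\ell_1^d(\zC)$, so $\prod_{j=1}^n\|\varphi_j\|=d^n$; writing $P=\varphi_1\cdots\varphi_n$, inequality \eqref{problempolarizacion} gives $\ccc_n(\ell_\infty^d(\zC))\ge d^n/\|P\|$. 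Thus it suffices to exhibit one choice of signs for which $\|P\|\le 2\sqrt{(24n)^d\,d^n}$.

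The key step is to bound $\|P\|=\sup_{z\in S_{\ell_\infty^d}}|P(z)|$ by the size of the coefficients of $P$. Since $P$ is $n$-homogeneous, $\|P\|=\sup_{\|z\|_\infty\le1}|P(z)|$; expanding $P(z)=\sum_{|\alpha|=n}c_\alpha z^\alpha$ and using $|z^\alpha|\le 1$ on the unit ball gives $\|P\|\le\sum_{|\alpha|=n}|c_\alpha|$, and by Cauchy--Schwarz this is at most $\sqrt N\,\bigl(\sum_\alpha|c_\alpha|^2\bigr)^{1/2}$, where $N=\binom{n+d-1}{d-1}$ is the number of degree-$n$ monomials in $d$ variables. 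Finally, orthonormality of the monomials $z^\alpha$ on the distinguished boundary $\mathbb{T}^d=\{z:|z_i|=1\}$ yields $\sum_\alpha|c_\alpha|^2=\int_{\mathbb{T}^d}|P|^2\,dm$, so $\|P\|\le\sqrt N\,\|P\|_{L^2(\mathbb{T}^d)}$.

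Next I would compute the expected $L^2$-mass. For fixed $z\in\mathbb{T}^d$, independence across $j$ together with $\mathbb{E}[\varepsilon_{ij}\varepsilon_{kj}]=\delta_{ik}$ gives $\mathbb{E}|P(z)|^2=\prod_{j=1}^n\mathbb{E}|\varphi_j(z)|^2=\prod_{j=1}^n\sum_{i=1}^d|z_i|^2=d^n$. Integrating over $\mathbb{T}^d$ and using Fubini yields $\mathbb{E}\|P\|_{L^2(\mathbb{T}^d)}^2=d^n$, and hence $\mathbb{E}\|P\|^2\le N d^n$ by the pointwise comparison of the previous paragraph. In particular there is a realization of the signs with $\|P\|^2\le N d^n$, which gives $\ccc_n(\ell_\infty^d(\zC))\ge d^n/\sqrt{N d^n}=\sqrt{d^n/N}$.

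It remains to estimate $N=\binom{n+d-1}{d-1}$. Using $\binom{n+d-1}{d-1}\le\bigl(e(n+d-1)/(d-1)\bigr)^{d-1}$ and distinguishing the cases $d-1\ge n$ and $d-1<n$ (the value $d=1$ being trivial), one checks that $N\le(24n)^d$ in all cases, with room to spare; this already yields $\ccc_n(\ell_\infty^d(\zC))\ge\sqrt{d^n/(24n)^d}$, which is stronger than the claimed bound, the factor $\tfrac12$ being comfortable slack. I expect the genuine obstacle to be precisely this $L^\infty$--$L^2$ comparison: the dimension-dependent factor $\sqrt N$ is what produces the $(24n)^{d/2}$ in the denominator, so the whole strength of the estimate hinges on choosing a good enough bound for the sup-norm in terms of the coefficients and then controlling $N$; by comparison the probabilistic computation of $\mathbb{E}\|P\|_{L^2}^2$ and the binomial estimate are routine.
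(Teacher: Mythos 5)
Your proof is correct, but it replaces the paper's key mechanism with a genuinely different one. Both arguments start identically: random $\pm1$-coefficient functionals $\varphi_j$, the observation $\|\varphi_j\|_{\ell_1}=d$, and the reduction $\ccc_n(\ell_\infty^d(\zC))\ge d^n/\|P\|$, followed by a probabilistic existence argument for a realization with $\|P\|$ small. Where the paper controls $\|P\|$ by covering $\mathbb{T}^d$ with $(24n)^d$ balls of radius $\pi/(24n)$, proving a Lipschitz estimate $|F(w,t)-F(z,t)|\le ne\|F(\cdot,t)\|\,\|w-z\|$ via Harris' inequality $\|DP\|\le \frac{n^n}{(n-1)^{n-1}}\|P\|$, and then combining a Chebyshev tail bound at each net point with a union bound, you use the deterministic coefficient inequality
\begin{equation*}
\|P\|_\infty \;\le\; \sum_{|\alpha|=n}|c_\alpha| \;\le\; \sqrt{N}\,\Bigl(\sum_{|\alpha|=n}|c_\alpha|^2\Bigr)^{1/2} \;=\; \sqrt{N}\,\|P\|_{L^2(\mathbb{T}^d)},\qquad N=\binom{n+d-1}{d-1},
\end{equation*}
(Parseval plus Cauchy--Schwarz) together with the first-moment computation $\zE\|P\|_{L^2(\mathbb{T}^d)}^2=d^n$. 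Your route is more elementary --- it needs neither Harris' derivative bound nor the net/union-bound machinery --- and it actually yields the stronger estimate $\ccc_n(\ell_\infty^d(\zC))\ge\sqrt{d^n/N}$: since $N\le(2en)^{d-1}$, this beats the stated $\frac12\sqrt{d^n/(24n)^d}$, so your verification that $N\le(24n)^d$ (which is correct in all cases, as you indicate) is even more than is needed. What the paper's method buys in exchange is a quantitative tail bound, $P(\|F(\cdot,t)\|>2R)<(24n)^d d^n/R^2$, so by enlarging $R$ one gets that \emph{most} sign choices work, not merely one; your first-moment argument gives the same kind of statement only after an extra application of Markov's inequality. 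Both nets and coefficient-counting produce a factor of order $n^{O(d)}$, which is why neither closes the $\sqrt{\log d}$ gap left by Theorem \ref{teo polarizacion}.
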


\begin{rem} Note that in particular, the result from above assures us that
$$
\ccc(\ell_\infty^d(\zC)) \geq  \sqrt{d}. \nonumber
$$
This improves the bound from Theorem \ref{teo polarizacion} where we had $\ccc(\ell_\infty^d(\zC))  \succ  \sqrt{d}.$
\end{rem}

We start by using some notation. 
Let $\{\ve_k^j:\Omega \rightarrow \zR\}_{j,k}$, with $j\in\{1,\ldots,n\}$ and $k\in \{1,\ldots, d\}$, be a family of independent Rademacher functions  over a probability space $(\Omega,\Sigma,P)$.  That is, $\{\ve_k^j\}_{j,k}$ are independent random variables such that $P(\ve_k^j=1)=P(\ve_k^j=-1)=\frac 1 2$ for $j=1,\ldots,n$ and $k=1,\ldots, d$. For any $t\in \Omega$ and $j\in\{1,\ldots,n\}$ we define the linear function $\varphi_j(\cdot,t):\ell_\infty^d\rightarrow \zC$ as $\varphi_j(z,t) =\sum_{k=1}^d \ve_k^j(t) z_k$ and $F:\ell_\infty^d\times \Omega \rightarrow \zC$ by
$$F(z,t) =\prod_{j=1}^n\varphi_j(z,t)=\sum_{k_1,\ldots,k_n=1}^d \ve_{k_1}^1\cdots \ve_{k_n}^nz_{k_1}\cdots z_{k_n}.$$

We will show the existence of some $t_0 \in \Omega$ such that the norm $\left\Vert \prod_{j=1}^n\varphi_j(\cdot,t_0) \right\Vert =\Vert  F(\cdot,t_0)\Vert$ is small.
To do this we need some auxiliary lemmas related to the function $F$, the geometry of the $d$ dimensional torus $\mathbb{T}^d=\{z\in \ell_\infty^d(\zC) : |z_k| = 1\}$ and the space $\ell_\infty^d(\zC)$.

\begin{lem}\label{aux primero}
For any natural number $N$, the $d$-dimensional torus $\mathbb{T}^d$ can be covered up with $N^d$ balls of $\ell_\infty^d(\zC)$, with center on $\mathbb{T}^d$ and radius $\frac{\pi}{N}$.
\end{lem}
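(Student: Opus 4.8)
The plan is to reduce the problem to the one-dimensional circle and then take a product grid, exploiting the fact that the $\ell_\infty^d(\zC)$ norm is the maximum over coordinates. First I would cover the unit circle $\mathbb{T}=\{w\in\zC:|w|=1\}$ by the $N$ equally spaced points $w_l = e^{2\pi i l/N}$, $l=0,1,\dots,N-1$. Since consecutive points are separated by an angle $2\pi/N$, any $e^{i\theta}\in\mathbb{T}$ lies within angular distance $\pi/N$ (half a gap) of the nearest $w_l$.

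The key estimate is to pass from angular distance to Euclidean (chord) distance. For two unit-modulus numbers with angular difference $\delta$, the chord length is $|e^{i\alpha}-e^{i\beta}| = 2\bigl|\sin(\delta/2)\bigr|$, and using $|\sin x|\le |x|$ one gets $2\bigl|\sin(\delta/2)\bigr|\le \delta$. Hence if the angular difference is at most $\pi/N$, the distance in $\zC$ is at most $\pi/N$. Thus the $N$ chosen points cover $\mathbb{T}$ by balls of radius $\pi/N$ centered at points of $\mathbb{T}$.

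Finally I would form the product grid: the $N^d$ points $(w_{l_1},\dots,w_{l_d})$ with each $l_k\in\{0,\dots,N-1\}$, all of which lie on $\mathbb{T}^d$. Given any $z=(e^{i\theta_1},\dots,e^{i\theta_d})\in\mathbb{T}^d$, I choose in each coordinate the nearest grid point $w_{l_k}$, so that $|z_k-w_{l_k}|\le \pi/N$ for every $k$. Since the $\ell_\infty^d(\zC)$ norm is the maximum of the coordinatewise distances, we obtain $\Vert z-(w_{l_1},\dots,w_{l_d})\Vert = \max_k |z_k - w_{l_k}| \le \pi/N$, so $z$ belongs to the ball of radius $\pi/N$ centered at this grid point. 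As $z$ was arbitrary, the $N^d$ balls cover $\mathbb{T}^d$.

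I do not expect a genuine obstacle here: the only quantitative input is the elementary chord–arc inequality $2|\sin(\delta/2)|\le\delta$, and everything else is the bookkeeping of turning a one-dimensional cover into a product cover. The one point to state carefully is that the centers of the one-dimensional cover already lie on $\mathbb{T}$, so that the product centers lie on $\mathbb{T}^d$ as the statement requires.
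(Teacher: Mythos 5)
Your proof is correct and is essentially the same as the paper's: the paper's one-line proof takes exactly your product grid of $N$th roots of unity $(e^{2\pi i j_1/N},\ldots,e^{2\pi i j_d/N})$ as centers, leaving the chord--arc estimate and the coordinatewise bookkeeping implicit. You have simply written out those details.
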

\begin{proof} It is enough to consider the balls of center $(e^{2\pi i\frac{j_1}{N}},\ldots, e^{2\pi i\frac{j_d}{N}})$, with $j_1,\ldots,j_d \in \{1,\ldots,N\}$.
\end{proof}

\begin{lem}\label{aux segundo} Given  $z \in \mathbb{T}^d$ and  a positive number $R$,  we have
$$P(|F(z,t)|>R)\leq \frac{1}{R^2}d^n.$$
\end{lem}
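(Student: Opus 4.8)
The plan is to read the statement as a second-moment (Chebyshev) estimate for the fixed point $z\in\mathbb{T}^d$. Applying Markov's inequality to the nonnegative random variable $|F(z,t)|^2$ gives
$$P(|F(z,t)|>R)=P(|F(z,t)|^2>R^2)\leq \frac{\zE|F(z,t)|^2}{R^2},$$
so the whole lemma reduces to showing that the second moment satisfies $\zE|F(z,t)|^2=d^n$. First I would exploit the product structure $F(z,t)=\prod_{j=1}^n\varphi_j(z,t)$ together with the independence of the families $\{\ve_k^j\}_k$ across distinct $j$: for a fixed $z$ the random variables $\varphi_1(z,t),\ldots,\varphi_n(z,t)$ depend on disjoint, mutually independent blocks of Rademachers, hence are independent, and so are the $|\varphi_j(z,t)|^2$. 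This licenses the factorization
$$\zE|F(z,t)|^2=\zE\prod_{j=1}^n|\varphi_j(z,t)|^2=\prod_{j=1}^n\zE|\varphi_j(z,t)|^2.$$

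The heart of the computation is the single factor $\zE|\varphi_j(z,t)|^2$. Writing $|\varphi_j(z,t)|^2=\varphi_j(z,t)\overline{\varphi_j(z,t)}=\sum_{k,l=1}^d \ve_k^j\ve_l^j\, z_k\overline{z_l}$ and using that the Rademacher variables are independent, mean zero, and square to $1$, we have $\zE[\ve_k^j\ve_l^j]=\delta_{kl}$, so all off-diagonal terms drop out and $\zE|\varphi_j(z,t)|^2=\sum_{k=1}^d|z_k|^2$. Here the hypothesis $z\in\mathbb{T}^d$ enters decisively: each $|z_k|=1$, whence $\zE|\varphi_j(z,t)|^2=d$. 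Multiplying the $n$ identical factors yields $\zE|F(z,t)|^2=d^n$, and combining with the Markov bound above gives exactly $P(|F(z,t)|>R)\leq d^n/R^2$.

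I do not expect a genuine obstacle here, since the argument is a routine second-moment estimate; the only points requiring a little care are the complex-conjugate bookkeeping in expanding $|\varphi_j|^2$ and the observation that it is the independence \emph{across} the index $j$ (not merely within each $j$) that permits factoring the expectation of the product — precisely the feature built into the definition of the family $\{\ve_k^j\}_{j,k}$. The reliance on $|z_k|=1$ is what makes the bound dimension-dependent through the factor $d^n$, and it is worth flagging that the same computation for general $z$ would instead produce $\bigl(\sum_k|z_k|^2\bigr)^n$.
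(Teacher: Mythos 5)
Your proposal is correct and follows essentially the same route as the paper: the paper likewise reduces the lemma to the second-moment identity $\zE(|F(z,\cdot)|^2)=d^n$ (obtained by expanding the multi-index sum and using independence of the whole family $\{\ve_k^j\}_{j,k}$ together with $|z_k|=1$) and then applies Chebyshev's inequality. Your factorization $\zE|F(z,\cdot)|^2=\prod_{j=1}^n\zE|\varphi_j(z,\cdot)|^2$ across the independent blocks indexed by $j$ is just an equivalent bookkeeping of that same computation.
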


\begin{proof}
%
%
%
%
If we write $z=(z_1,\ldots,z_d)$, then the expected value of $\vert F(z, \cdot) \vert^2$ is
\begin{eqnarray}
\zE(|F(z,\cdot)|^2)&=&\zE\left(\left|\sum_{k_1,\ldots,k_n=1}^d \ve_{k_1}^1\cdots \ve_{k_n}^n z_{k_1}\cdots z_{k_n}\right|^2\right) \nonumber \\
&=&\sum_{k_1,\ldots,k_n=1}^d | z_{k_1}\cdots z_{k_n}|^2 =d^n, \nonumber \
\end{eqnarray}
where we used the independence of the family $\{\ve_k^j\}_{j,k}$. The result now follows from Chebyshev's inequality.
\end{proof}
\begin{lem}\label{aux tercero} For any pair of norm one vectors $z,w\in \ell_\infty^d(\zC)$ and any $t\in \Omega$, we have
$$|F(w,t)-F(z,t)| \leq n \, e\, \|F(\cdot,t)\| \|w-z\|.$$
\end{lem}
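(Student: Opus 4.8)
The plan is to reduce the claim to a pointwise bound on the directional derivative of $F(\cdot,t)$ and then exploit that, over $\zC$, $F(\cdot,t)$ is an $n$-homogeneous polynomial whose growth is governed by a Cauchy (Bernstein-type) estimate. Fix $t$ and abbreviate $\varphi_j=\varphi_j(\cdot,t)$ and $F=F(\cdot,t)$. We may assume $w\ne z$ (otherwise the inequality is trivial); set $u=w-z$ and consider the segment $y_\lambda=(1-\lambda)z+\lambda w$ for $\lambda\in[0,1]$, which by convexity satisfies $\|y_\lambda\|\le 1$. Since $\lambda\mapsto F(y_\lambda)$ is a polynomial in the real variable $\lambda$, the fundamental theorem of calculus gives
$$F(w)-F(z)=\int_0^1 \frac{d}{d\lambda}F(y_\lambda)\,d\lambda=\int_0^1 DF(y_\lambda)[u]\,d\lambda,$$
so it suffices to prove the uniform estimate $|DF(y_\lambda)[u]|\le n\,e\,\|F\|\,\|u\|$ for every $\lambda\in[0,1]$.

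The key step is this derivative bound, obtained by complexifying in a single scalar variable. For fixed $\lambda$, consider
$$h(\zeta)=F(y_\lambda+\zeta u)=\prod_{j=1}^n\bigl(\varphi_j(y_\lambda)+\zeta\,\varphi_j(u)\bigr),$$
which is a polynomial in $\zeta$ of degree at most $n$, and observe that $DF(y_\lambda)[u]=h'(0)$. Because $F$ is $n$-homogeneous, for $|\zeta|=r$ we have $|h(\zeta)|\le\|F\|\,\|y_\lambda+\zeta u\|^n\le\|F\|\,(1+r\|u\|)^n$. Cauchy's estimate then yields $|h'(0)|\le \tfrac1r\,\|F\|\,(1+r\|u\|)^n$, and choosing $r=1/(n\|u\|)$ gives
$$|DF(y_\lambda)[u]|=|h'(0)|\le n\|u\|\,\|F\|\,\Bigl(1+\tfrac1n\Bigr)^n\le n\,e\,\|F\|\,\|u\|,$$
using $(1+\tfrac1n)^n\le e$. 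Combining this with the integral representation above closes the argument, with $\|u\|=\|w-z\|$.

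The point worth stressing, and the reason a naive approach fails, is that telescoping the difference $\prod_j\varphi_j(w)-\prod_j\varphi_j(z)$ directly only produces the bound $|F(w)-F(z)|\le n\,d^n\,\|w-z\|$, in which the product of the functional norms $\prod_j\|\varphi_j\|=d^n$ appears in place of $\|F\|$; since $\|F\|$ may be far smaller than $d^n$, this is much too weak. The main obstacle is thus precisely to trade $d^n$ for $\|F\|$, and this is exactly what the complex Cauchy estimate achieves. It is essential here that we work over $\zC$, so that $h$ is a genuine polynomial in $\zeta$ amenable to the maximum–modulus/Cauchy machinery, and the near-optimal choice of radius is what distills the clean constant $e$.
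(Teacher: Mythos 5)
Your proof is correct, and while its overall skeleton matches the paper's (both reduce the difference $F(w,t)-F(z,t)$ to a bound on the directional derivative of $F(\cdot,t)$ along the segment joining $z$ and $w$), the key ingredient is genuinely different. The paper applies a mean value argument along $\gamma(s)=ws+z(1-s)$ and then invokes Harris's inequality \cite[Corollary~1]{H}, namely $\Vert DP \Vert \leq \frac{n^n}{(n-1)^{n-1}} \Vert P \Vert$ for $n$-homogeneous polynomials on complex Banach spaces, as a black box, finishing with $\frac{n^n}{(n-1)^{n-1}}\leq n e$. You instead prove the required directional derivative bound from scratch: the one-variable Cauchy estimate applied to $h(\zeta)=F(y_\lambda+\zeta u,t)$ on the circle of radius $r=1/(n\Vert u\Vert)$ gives $|DF(y_\lambda)[u]|\leq n\left(1+\tfrac1n\right)^n\Vert F\Vert\,\Vert u\Vert\leq n e\,\Vert F\Vert\,\Vert u\Vert$. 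This buys self-containedness at no cost; indeed, optimizing the radius at $r=1/((n-1)\Vert u\Vert)$ in your computation recovers exactly Harris's constant $\frac{n^n}{(n-1)^{n-1}}$, so your method is really an inline proof of the special case of Harris's result that is needed here. A further small advantage: your representation $F(w)-F(z)=\int_0^1 DF(y_\lambda)[u]\,d\lambda$ via the fundamental theorem of calculus is more rigorous than the paper's mean value step, which is written as an exact equality $|F(w,t)-F(z,t)|=|DF(\gamma(c),t)\circ D\gamma(c)|$ for some $c$ --- a statement that is not literally valid for complex-valued functions; the integral form (or an inequality with a supremum over the segment) is the correct formulation, and your argument supplies it.
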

\begin{proof} If we define $\gamma(s)=ws +z (1-s)$ for $0\leq s\leq 1$,  there is $0\leq c\leq 1$ such that
\begin{eqnarray}
|F(w,t)-F(z,t)|&= & |F(\gamma(1),t)-F(\gamma(0),t)| \nonumber \\
& = &|D F(\gamma(c),t)\circ D\gamma (c)| \nonumber \\
& = &\frac{n^n}{(n-1)^{n-1}}\Vert  F(\cdot,t)\Vert  \Vert \gamma(c) \Vert^{n-1} \Vert D\gamma(c) \Vert \label{h inequality} \\
& \leq & n e \Vert  F(\cdot,t)\Vert   \Vert w-z \Vert \nonumber \
\end{eqnarray}
where in \eqref{h inequality} we have used the following inequality, which is a particular case of a result by Harris~\cite[Corollary~1]{H}: if $P:X\rightarrow \zC$ is an $n$-homogeneous polynomial over a complex Banach space $X$, then
$$\Vert DP \Vert \leq \frac{n^n}{(n-1)^{n-1}} \Vert P \Vert.$$
\end{proof}

\begin{lem}\label{aux cuarto} For any positive number $R$,
$$P(\| F(\cdot,t) \| > 2R) < (24n)^d\frac{d^n}{R^2} .$$
\end{lem}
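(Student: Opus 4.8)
The plan is to combine the three preceding lemmas through a standard net/union-bound argument, after first reducing the supremum that defines $\|F(\cdot,t)\|$ to a maximum over the distinguished boundary $\mathbb{T}^d$. The first observation is that, for fixed $t$, the function $F(\cdot,t)$ is a polynomial in $z$, hence separately holomorphic on the unit ball of $\ell_\infty^d(\zC)$, which is the closed polydisc. By the maximum modulus principle its modulus attains its supremum over the polydisc on $\mathbb{T}^d$, and since $\mathbb{T}^d$ is compact and $F(\cdot,t)$ continuous, there is $z^*=z^*(t)\in\mathbb{T}^d$ with $\|F(\cdot,t)\|=|F(z^*,t)|$.

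Next I would fix $N=24n$ and take the $N^d$ centers $z_1,\dots,z_{N^d}\in\mathbb{T}^d$ of the covering balls of radius $\pi/N$ provided by Lemma~\ref{aux primero}. Choosing an index $i$ with $\|z^*-z_i\|\le \pi/N$ and applying Lemma~\ref{aux tercero} (legitimate because $z^*$ and $z_i$ are norm-one vectors), one gets
$$|F(z^*,t)-F(z_i,t)|\le n\,e\,\|F(\cdot,t)\|\,\frac{\pi}{N}=\frac{e\pi}{24}\,\|F(\cdot,t)\|.$$
Since $\tfrac{e\pi}{24}<\tfrac12$, this yields $|F(z_i,t)|\ge\bigl(1-\tfrac{e\pi}{24}\bigr)\|F(\cdot,t)\|>\tfrac12\|F(\cdot,t)\|$. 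In particular, whenever $\|F(\cdot,t)\|>2R$ there is an index $i$ with $|F(z_i,t)|>R$, so the event $\{\|F(\cdot,t)\|>2R\}$ is contained in $\bigcup_{i=1}^{N^d}\{|F(z_i,t)|>R\}$.

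Finally I would apply the union bound together with the Chebyshev estimate of Lemma~\ref{aux segundo}, namely $P(|F(z_i,t)|>R)\le d^n/R^2$ for each center $z_i\in\mathbb{T}^d$, to conclude
$$P\bigl(\|F(\cdot,t)\|>2R\bigr)\le\sum_{i=1}^{N^d}P\bigl(|F(z_i,t)|>R\bigr)\le N^d\,\frac{d^n}{R^2}=(24n)^d\,\frac{d^n}{R^2}.$$
To recover the \emph{strict} inequality in the statement I would exploit the slack in the constant: setting $c=2-\tfrac{e\pi}{12}>1$, the same deviation estimate shows $\{\|F(\cdot,t)\|>2R\}\subseteq\bigcup_i\{|F(z_i,t)|>cR\}$, whence $P(\|F(\cdot,t)\|>2R)\le N^d d^n/(c^2R^2)<(24n)^d d^n/R^2$.

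The probabilistic core (Chebyshev plus union bound) is routine once the net is in place, so the main point requiring care is the bookkeeping of the constant: one must pick $N$ large enough that the Lipschitz deviation of Lemma~\ref{aux tercero} is absorbed, i.e.\ $ne\pi/N<\tfrac12$, while simultaneously keeping $N=24n$ an integer so that Lemma~\ref{aux primero} applies verbatim and the final exponent reads exactly $(24n)^d$. The reduction to the torus via the maximum modulus principle is the other step worth stating explicitly, since the covering lemma only controls $F$ on $\mathbb{T}^d$, not on the whole ball.
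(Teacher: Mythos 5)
Your proof is correct and follows essentially the same route as the paper's: reduce to the distinguished boundary $\mathbb{T}^d$ by the maximum modulus principle, take the $(24n)^d$-point net from Lemma~\ref{aux primero}, absorb the deviation using the Lipschitz bound of Lemma~\ref{aux tercero} (via $e\pi/24 < 1/2$), and conclude with the union bound and the Chebyshev estimate of Lemma~\ref{aux segundo}. Your extra care in securing the \emph{strict} inequality through the slack constant $c = 2 - e\pi/12 > 1$ is a minor refinement of a point the paper glosses over, not a different argument.
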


\begin{proof} By Lemma \ref{aux primero}, there is a family of points $\{w_1,\ldots,w_{(24n)^d}\}\subseteq \mathbb{T}^d$ such that for any $z\in \mathbb{T}^d$,  we have
$$\Vert w_i-z\Vert\leq\frac{\pi}{24n} < \frac{1}{2ne} $$
for some $i=1,\ldots, (24n)^d$.
For any fixed $t\in \Omega$, by the maximum modulus principle, there is $z_0\in \mathbb{T}^d$ such that
$$\|F(\cdot,t)\| =|F(z_0,t)|.$$
Let $i$ be such that $\| w_i - z_0  \| \leq \frac{1}{2ne}$. By Lemma \ref{aux tercero}
$$|F(w_i,t)-F(z_0,t)| \leq \|F(\cdot,t)\| n e \|w_i-z_0\| < \|F(\cdot,t)\| \frac{1}{2}.$$
Therefore, for each $t$ we have
$$\frac{\|F(\cdot,t)\| }{2} < |F(w_i,t)|$$
for some $i$, and then we conclude that
$$\|F(\cdot,t)\| < 2 \max_i \{|F(w_i,t)|:i=1,\ldots,(24n)^d\}.$$
Since $t\in \Omega$ was arbitrary, using Lemma \ref{aux segundo}, we have
\begin{eqnarray}
P(\| F(\cdot,t) \| > 2R) &<& P(\max_i \{ |F(w_i,t)|:i=1,\ldots,(24n)^d\} > R) \nonumber \\
&\leq& \sum_{i=1}^{(24n)^d} P(|F(w_i,t)| > R) \nonumber \\
&\leq& (24n)^d\frac{d^n}{R^2} , \nonumber
\end{eqnarray}
as desired.
\end{proof}

\noindent Now we are ready to prove the main result of this section.

\begin{proof}[Proof of Proposition~\ref{prop infinito}]
Take in Lemma \ref{aux cuarto}
$$R= \sqrt{(24n)^d d^n}.$$
Then
$$
P (\| F(\cdot,t) \| > 2R) < 1.
$$
Therefore, there is $t_0 \in \Omega$, such that
\begin{eqnarray}
\| \prod_{j=1}^n \varphi_j(\cdot, t_0) \| &=& \| F(\cdot,t_0) \| \leq 2R \nonumber \\
&=& 2\sqrt{(24n)^d d^n} = 2\sqrt{\frac{(24n)^d}{ d^n}}d^n \nonumber \\
&=& 2\sqrt{\frac{(24n)^d}{ d^n}}\prod_{j=1}^n \|\varphi_j(\cdot, t_0) \|, \
\end{eqnarray}
which ends the proof.
\end{proof}



\end{document}